\newcommand{\evnrow}{\rowcolor[gray]{0.95}}
\newcommand{\oddrow}{}
\theoremstyle{plain}
\newtheorem{lema}{Lemma}[section]
\newtheorem{prop}[lema]{Proposition}
\newtheorem{thrm}[lema]{Theorem}
\theoremstyle{remark}
\newtheorem{rmk}[lema]{Remark}
\theoremstyle{definition}
\newtheorem{dfn}[lema]{Definition}
\def\QQ{{\mathbb Q}}
\def\CC{{\mathbb C}}
\def\NN{{\mathbb N}}
\def\AA{{\mathbb A}}
\def\PP{{\mathbb P}}
\def\wgr{\mathrm{wGr(2,5)}}
\def\Gr{\mathrm{Gr}}
\def\wp2{\mathrm{w}(\PP^2\times \PP^2)}
\def\wF{\mathrm{w}\mathcal{F}}
\def\wG{\mathrm{w}\mathcal{G}}
\def\wP{\mathrm{w}\mathcal{P}}
\def\Bs{{\mathcal{B}s}}
\def\w{{\mathrm{w}}}
\def\P{{\mathbf P}}
\def\Pf{{\mathrm{Pf}}}
\def\orb{\mathrm{orb}}
\def\GCD{\mathrm{GCD}}
 \newcommand{\into}{\hookrightarrow}
 \newcommand{\Oh}{\mathcal O}
 \newcommand{\sX}{\mathcal X}
 \newcommand{\Si}{\Sigma}
 \newcommand{\la}{\lambda}
 \newcommand{\al}{\alpha}
\newcommand{\PxP}{\PP^2 \times \PP^2}
\begin{document}

\author[M.I.~Qureshi]{Muhammad Imran Qureshi}
\address{Muhammad Imran Qureshi,
 Department of Mathematics, SBASSE
Lahore University of Management Sciences (LUMS)
Lahore, Pakistan and Mathematisches Institut, Universit\"at T\"ubingen, Germany}
\email{i.qureshi@maths.oxon.org}
\title{Biregular Models of log Del Pezzo surfaces with rigid singularities }
\subjclass[2010]{Primary 14J10, 14M07, 14J45, 14Q10}

\keywords{log Del Pezzo surface, weighted Gr(2,5), weighted $\PxP$, Gorenstein format}

\begin{abstract}
We construct biregular  models  of  families of  log  Del Pezzo surfaces with rigid  cyclic  quotient singularities such that a general member in each family is wellformed and quasismooth. Each biregular model  consists of infinite series of such families of  surfaces; parameterized by the natural numbers $\NN$. Each family in these biregular models is  represented by either a codimension 3 Pfaffian format modelled on the Pl\"ucker embedding of $\Gr(2,5)$ or a codimension 4  format modelled on the Segre embedding of \(\PxP\).    In particular, we show the existence of two biregular  models in codimension 4 which are bi parameterized,  giving rise to an  infinite series of models of families of log Del Pezzo surfaces.  We identify those biregular models of surfaces which do not admit a  \(\QQ\)-Gorenstein deformation to a toric variety.\end{abstract}
\maketitle

\section{Introduction}
A  \emph{log  Del Pezzo surface} $X$ is a projective surface $X$ with $-K_X$ ample and having  isolated  cyclic quotient singularities; such surfaces are also referred to as  orbifold Del Pezzo surfaces with isolated orbifold points.
  The log Del Pezzo surfaces  form an interesting class of  surfaces which appear naturally in various contexts including the minimal model program \cite{Kawamata-MMP}.
   Recently, the construction and  classification of orbifold Del Pezzo surfaces have arisen in  the mirror symmetry program of Coates--Corti et al \cite{CCGGK} formally conjectured  in \cite{dp-AMS} for orbifold Del Pezzo surfaces. 
   Such  surfaces with $m\times \frac 13 (1,1)$ points have been classified by Corti and Heuberger in \cite{dp-CL}.  The classification with a single orbifold point of type  \(\frac 1r(1,1)\) is provided by Cavey and Prince \cite{dp-CP}. 
    The  log Del Pezzo surfaces have also been studied from the point of view of the existence of orbifold Kahler--Einstein metric on such surfaces in \cite{dp-zoo,dp-exceptional,Park} starting with \cite{Jhonson-Kollar}.
      In \cite{Jhonson-Kollar}, Johnson and Koll{\' a}r determined the complete list of del Pezzo hypersurfaces of index 1 in three-dimensional weighted projective spaces, admitting a Kahler-Einstein metric. In \cite{dp-zoo}, Cheltsov and Shramov classified the Del Pezzo hypersurfaces of index 1 in a weighted projective space  satisfying certain condition on their log-canonical threshold. 
      The wellformed and quasismooth weighted
complete intersection Del Pezzo surfaces  have been classified by Mayanskiy
in \cite{Mayanskiy}.
 
We construct biregular models  of   log Del Pezzo  surfaces  of Fano index \(I=1,2\)  in codimension  3 and 4 which are not complete intersections. Each biregular model consists of an infinite series of  families of wellformed and quasismooth  log Del Pezzo surfaces. Their equations  can be  described by maximal Pfaffians of a $5 \times 5$ skew symmetric matrix of forms or $2\times 2$ minors of the size 3 matrix   of forms.   We also compute their invariants like plurigenera,  degree of the canonical class etc. We identify those  families of surfaces which do not admit a $\QQ$-Gorenstein
deformation to a toric variety. Moreover, we also construct several  other families of wellformed and quasismooth  log Del Pezzo surfaces in codimension 3 and 4.

 Any normal surface $X$ with cyclic quotient singularities admits a $\QQ$-Gorenstein
 partial smoothing to a surface with only rigid singularities by \cite{Kollar-SB}.
 So  we only  concentrate on those surfaces having  quotient singularities which
are rigid under \(\QQ \)-Gorenstein smoothing. We give a  complete classification of such log Del Pezzo surfaces up to  certain values of a parameter called the  adjunction number of the free resolution of its corresponding graded ring, following \cite{BKZ,QJSC}. This is equivalent to finding all possible families  $\sX\subset \PP(a_i)$ with $\sum a_i \le q+I $, where $q$ is the adjunction number and $I$  is the Fano index of $\sX$.    It is not a complete list of such varieties which can be constructed using  these two  Gorenstein formats as the adjunction number \(q\) is unbounded. On the other hand the computational evidence suggest  that our list of biregular models is  complete. 
 
 We consider these families of log Del Pezzo surfaces as regular pullbacks (see \cite{BKZ}) from  key varieties of weighted Grassmannian \(\w\Gr(2,5)\) and \(\w(\PxP)\).  They also have a description as complete intersections in  some weighted Grassmannian $\wgr$  or  in the Segre embedding of weighted $\PxP$  or in the projective cone over either of those ambient  varieties, following \cite{QJSC}.  We exploit the latter description in the proofs. The most difficult part in our proofs is the singularity analysis of these families.  In  the case of orbifold Del Pezzo surfaces which are complete intersection in weighted projective space, the   quasismoothness can be proved by using criterion given by  Fletcher \cite{fletcher}.  But  we do not have a straight forward criteria in higher codimension and we have to prove it case by case. Thus the main challenging part of the computation is to  prove the quasismoothness of these  models.  The invariants like  $h^0(-K)$ and $-K^2$  can be calculated by using the Hilbert series  of these biregular models. The Hilbert series also helps us to identify those families which do not admit a $\QQ$-Gorenstein deformation to a toric variety.
    Our motivation is to  provide a vast testing ground for the questions like studying the  existence of an orbifold  Kahler--Einstein metric on a Del Pezzo surface or studying the mirror symmetry conjectures of \cite{dp-AMS}.  
\subsection*{Summary and Results}  \S\ref{S-Background} consists of the preliminary and background material for the proofs in the rest of the sections. We recall the definition of a $T$-singularity, a $R$-singularity and the  log Del Pezzo surface. We also introduce the notion of model of log Del Pezzo surfaces and  give a characterization for a quotient singularity $\frac 1r(a,b)$ to be an $R$-singulary in the context of this article. At the end, we give a general strategy of the proofs coming in the coming sections.  
 
In \S\ref{Pfaffians} we  construct biregular models of families of orbifold Del Pezzo surfaces in codimension 3 which are the  regular pullbacks from the key variety $\wgr$ which we refer to as Pfaffian models.  We recall the definition of weighted Grassmannian $\wgr$  and the formula for  Hilbert Series  from \cite{wg}. We describe how to compute the degree of the canonical class \(-K^2\) of a log Del Pezzo surface  appearing in Pfaffian models.   The main part of this section  gives the proof of Theorem \ref{thrm-Pf} where we show the existence, wellformedness and  quasismoothness of each model.      In total we construct eight biregular  models, four of them have Fano index 1 and four have   index 2. In fact, we
get 9 models but one of the index 2 models is not  quasismooth which we briefly discuss
in \ref{fail-Pf}.

\begin{thrm}  There are 8  biregular models (infinite series)  of  families of log Del Pezzo surfaces   with rigid singularities such that a general member of each family in each model  is  wellformed and  quasismooth  with the  basket of  singularities and invariants given in  Table \ref{tab-Pf}.  There equations  are given by  maximal  Pfaffians of the  \(  5 \times 5 \) skew symmetric matrix of forms giving the embedding of each family  $\sX\into \PP^5(a,\ldots,f)$. Moreover, at least 2 of these models do not admit a $\QQ$-Gorenstein deformation to a toric variety.
\label{thrm-Pf}  
 \end{thrm}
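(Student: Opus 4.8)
The plan is to establish each of the three assertions of Theorem~\ref{thrm-Pf} --- existence, wellformedness, and quasismoothness of a general member of each family, together with the $\QQ$-Gorenstein rigidity statement --- by treating the eight candidate biregular models one at a time, using the uniform description of each as a regular pullback from the key variety $\wgr$ embedded by its Pl\"ucker coordinates $p_{ij}$. First I would fix, for each model, the weight data: the weights $(a,\dots,f)$ on $\PP^5$, the induced weights on the Pl\"ucker coordinates $p_{ij}$, and the corresponding grading on the $5\times 5$ skew matrix of forms $M$. The equations of $\sX\into\PP^5(a,\dots,f)$ are the five maximal Pfaffians $\Pf_i(M)$; by construction the general such $M$ gives a codimension $3$ subvariety, so $\sX$ has the expected dimension $2$. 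The adjunction number $q$ and the Fano index $I=1,2$ are read off from the format, and the constraint $\sum a_i \le q+I$ together with the requirement of rigid (i.e. $R$-)singularities cuts the admissible weight data down to the finite list of eight (plus the one failing case), using the characterization of $R$-singularities recalled in \S\ref{S-Background}.

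Next I would verify wellformedness: one checks that $\sX$ is not contained in any coordinate stratum where $\PP^5(a,\dots,f)$ fails to be wellformed, and that $\sX$ meets the singular locus of the ambient weighted projective space only in its (isolated) orbifold points, so that $\codim\big(\sX\cap\mathrm{Sing}\,\PP^5(a,\dots,f),\ \sX\big)\ge 2$; this is a direct weight-bookkeeping computation from the list of $p_{ij}$-weights and the Pfaffian equations. Then comes the quasismoothness argument, which --- as the introduction emphasizes --- is the main obstacle, since there is no black-box criterion in codimension $3$ analogous to Fletcher's for hypersurfaces. Here I would work on the affine cone $\widetilde\sX\subset\AA^6$ and show that, away from the origin, a general choice of the entries of $M$ in the prescribed degrees makes $\widetilde\sX$ smooth. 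The strategy is stratum-by-stratum: on each coordinate stratum of $\AA^6$ where some subset of $a,\dots,f$ vanishes, restrict $M$, compute the restricted Pfaffian ideal and the Jacobian of $(\Pf_1,\dots,\Pf_5)$, and check that the rank drops to the expected value on the open set; one uses that if the generic member of the linear system spanned by the monomials of the right degree has no base points on a stratum, Bertini gives smoothness there, and otherwise one exhibits an explicit monomial in $M$ curing the singularity. Low-dimensional strata (coordinate points and lines) have to be handled by hand, and this is where the case analysis genuinely differs between the eight models.

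Finally, for each model I would compute the Hilbert series of the graded ring from the Pfaffian resolution (the numerator is determined by the degrees of the $5$ relations, the $5$ first syzygies, and so on in the self-dual codimension $3$ format), and extract $-K^2$ and $h^0(-K_X)$; the degree of $-K_X$ and the plurigenera populate Table~\ref{tab-Pf}. For the last clause, I would use the orbifold Riemann--Roch / Hilbert series obstruction: a log Del Pezzo surface admitting a $\QQ$-Gorenstein deformation to a toric variety must have $-K^2$ and basket compatible with the constraints coming from a Fano polygon (equivalently, the singularity content must be realizable by a toric degeneration in the sense of \cite{dp-AMS,dp-CL}); I would identify at least two of the eight models whose numerical data (value of $-K^2$ against the sum of contributions of the residual $T$-singularities in the basket) violate this, hence cannot be $\QQ$-Gorenstein smoothable to a toric surface. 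The bulk of the work, and the part most resistant to uniform treatment, is the quasismoothness verification on the small coordinate strata, which I expect to occupy most of the proof.
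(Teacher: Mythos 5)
Your overall architecture (existence as a regular pullback from $\wgr$, wellformedness by weight bookkeeping on the singular strata, quasismoothness via a stratum-by-stratum Jacobian analysis plus Bertini, invariants from the Hilbert series of the Pfaffian resolution) matches the paper's. But there are two places where your plan either has a genuine gap or takes a route the paper deliberately avoids.

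First, quasismoothness away from the orbifold points. You propose to handle the base locus of the linear systems entirely by hand, ``exhibiting an explicit monomial in $M$ curing the singularity'' on each low-dimensional stratum. The paper explicitly concedes that this is not feasible in most of the eight models: the base locus of the intersecting forms can be positive-dimensional (e.g.\ a curve of degree $3r$ inside a weighted $\PP^2$ for $\Pf_{14}$), and no uniform-in-$r$ tangent-monomial argument is given there. The paper's actual resolution is a hybrid: it shows the base locus is \emph{geometrically constant} as $r$ varies within a model, and then verifies quasismoothness by computer algebra (\textsc{Magma}) for explicit ranges of $r$. Unless you can actually carry out the by-hand analysis on those positive-dimensional base loci for all $r$ simultaneously --- which would be a genuine improvement on the paper --- your proposal is missing the step that makes the argument close; asserting that the strata ``have to be handled by hand'' is not the same as handling them. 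You would also need the observation that the candidate list is produced by a search bounded in adjunction number, so the theorem only asserts the \emph{existence} of these eight models, not that the weight constraints ``cut down to'' exactly eight; one further numerical candidate (Remark~\ref{fail-Pf}) satisfies all the combinatorics but fails quasismoothness on one affine patch, which is precisely why the quasismoothness verification cannot be waved through.

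Second, the non-deformation to a toric variety. Your proposed obstruction via singularity content and Fano polygon constraints is both heavier and slightly off: the baskets here consist entirely of $R$-singularities, so there are no ``residual $T$-singularities'' to sum over. The paper's argument is much simpler and you should use it: $h^0(-K)$ is invariant under $\QQ$-Gorenstein deformation, a toric Fano always has $h^0(-K)>0$ (the origin lies in the Fano polytope), and the two models $\Pf_{14}$ and $\Pf_{24}$ have $h^0(-K)=0$ by direct Hilbert-series computation, so they cannot deform to a toric surface.
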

 \renewcommand*{\arraystretch}{1.2}
\begin{longtable}{>{\hspace{0.5em}}llccccr<{\hspace{0.5em}}}
\caption{Biregular log Del Pezzo Models in  Pfaffian format where  $q=r-1, s=r+1,t=r+2,u=r+3,v=2r+1,y=2r-2,z=2r-1,m=3r-2$. } \label{tab-Pf}\\
\toprule
\multicolumn{1}{c}{Model}&\multicolumn{1}{c}{WPS \& Param }&\multicolumn{1}{c}{Basket $\mathcal{B}$}&\multicolumn{1}{c}{$-K^2$}&\multicolumn{1}{c}{$h^0(-K)$}&\multicolumn{1}{c}{Weight Matrix}\\
\cmidrule(lr){1-1}\cmidrule(lr){2-2}\cmidrule(lr){3-3}\cmidrule(lr){4-5}\cmidrule(lr){6-6}
\endfirsthead
\multicolumn{7}{l}{\vspace{-0.25em}\scriptsize\emph{\tablename\ \thetable{} continued from previous page}}\\
\midrule
\endhead
\multicolumn{7}{r}{\scriptsize\emph{Continued on next page}}\\
\endfoot
\bottomrule
\endlastfoot
\evnrow $\Pf_{11}$ & 
  $\begin{matrix}
  \PP(1^3, r^2, z)\\  \; r \ge 2 \\ 
  w=\frac 12(1,1,1,z,z)
  \end{matrix}$&
    
$\dfrac{1}{z}(1,1)$
&  $\dfrac{2r+3}{2r-1} $             & 3&

$ \footnotesize{\begin{matrix} 1&1&r&r\\ &1&r&r\\ &&r&r\\ &&& z \end{matrix}}$  \\

\oddrow $\Pf_{12}$ & $\begin{matrix}
  \PP(1^2, 2,r^2,z)\\  r=2n+1,\; n\ge 1 \\
  w=(0,1,1,q,r)  
  
  \end{matrix}$ & $\dfrac{1}{r}(2,q),\dfrac{1}{z}(1,1)$&$\dfrac{2 r^2+5 r+1}{4 r^2-2 r}$&$  2 $&$ \footnotesize{\begin{matrix} 1&1&q&r\\ &2&r&s\\ &&r&s\\ &&& z \end{matrix}} $\\  

\evnrow $\Pf_{13} $&$ 
  \begin{matrix}
  \PP(1, 2,r^{2}, z,m),\\  r=2n+1,\; n \ge 1\\
 w= \frac 12(4-s,s-2,\\s,3s-6,3s-4)
  \end{matrix}$&$ 
    \begin{matrix}
 \dfrac{1}{r}(2,q), \dfrac{1}{m}(r,z)
\end{matrix}$&$\dfrac{3 r+1}{r (3 r-2)}$&$1
               $&$
 \footnotesize\begin{matrix} 1&2&r&s\\ &r&y&z\\ &&z&2r\\ &&& m \end{matrix} $ \\

\oddrow $\Pf_{14} $&$ 
  \begin{matrix}
  \PP( 2,r^{2},s,t, z)\\  r=2n+1,\; n \ge 1\\
  w=\frac 12(1,3,z,z,2r+1)
  \end{matrix}$&$     \begin{matrix}
  \dfrac{1}{t}(2,s),\dfrac{1}{z}(1,1)\\3\times \dfrac{1}{r}(2,q),
\end{matrix}$&$\dfrac{4 r+3}{r (r+2) (2 r-1)}
               $&$0$&$

 \footnotesize\begin{matrix} 2&r&r&s\\ &s&s&t\\ &&z&2r\\ &&& 2r \end{matrix}$  \\
 \evnrow $\Pf_{21} $&$ 
  \begin{matrix}
  \PP(1^2, 2,3,r, s);\\  r=3n,\; n \ge 2 \\
  w=(0,1,1,2,q)
  \end{matrix}$&$
    \begin{matrix}
\dfrac{1}{3}(1,1),\dfrac{1}{r}(1,1),\\\dfrac{1}{s}(3,r)
\end{matrix}$&$\dfrac{4 \left(2 r^2+4 r+3\right)}{3 r (r+1)}               $&$  2$&$

 \footnotesize{\begin{matrix} 1&1&2&q\\ &2&3&r\\ &&3&r\\ &&& s \end{matrix}}$ \\

\oddrow $\Pf_{22} $&$ \begin{matrix}
  \text{ rest same as } \Pf_{21}\\  r=3n+1,\; n \ge 2
  \end{matrix} $&$  \dfrac{1}{r}(1,1),\dfrac{1}{s}(3,r)$&$\text{ Same as } \Pf_{21} $&$ 2$&$ \text{Same as}  \Pf_{21}$\\  

\evnrow $\Pf_{23} $&$ 
  \begin{matrix}
  \PP(1, 3,r, s,t,u)\\  r=3n+2,\; n \ge 2\\
  w=(0,1,2,r,s)
  \end{matrix}$&$
    \begin{matrix}
 \dfrac{1}{3}(1,1),\\ \dfrac{1}{r}(1,1),\dfrac{1}{u}(3,t)
\end{matrix}$&$\dfrac{8 r+36}{3 r^2+9 r}
               $&$1$&$
 \footnotesize\begin{matrix} 1&2&r&s\\ &3&s&t\\ &&t&u\\ &&& v \end{matrix} $ \\

\oddrow $\Pf_{24} $&$ 
  \begin{matrix}
  \PP(3, r, s^2,t^2)\\  r=3n,\; n \ge 2\\
  w=\frac 12(q,s,s,u,u)
  \end{matrix}$&$
    \begin{matrix}
 \dfrac{1}{3}(1,1), \dfrac{1}{r}(1,1),\\\dfrac{1}{s}(3,r),2\times \dfrac{1}{t}(3,s)
\end{matrix}$&$\dfrac{4 (5 r+6)}{3 r \left(r^2+3 r+2\right)}$&$0
               $&$
 \footnotesize\begin{matrix} r&r&s&s\\ &s     &t&t\\ &&t&t\\ &&& u \end{matrix} $ \\
\end{longtable}
The first model in the above table has already been  discussed in \cite{dp-CP}  where its description as a toric variety has been provided. 

In \S\ref{Binomials} we prove the existence, wellformedness and quasismoothness  of some codimension 4  biregular models  of log Del Pezzo surfaces with rigid singularities. They can be realized as  regular pullbacks from  $\PxP$ format which we denote  by $\wP$.  We recall the definition of weighted $\PxP$ and a formula for its Hilbert series from \cite{Sz05}. We use the graded  free resolution information from its  Hilbert series to give a formula to compute the anti canonical degree  $-K^2$  of  log Del Pezzo surfaces in a given $\wP$ variety.  In total there are four biregular models of index 1. The case of index  2 models  is quite special  so we treat it separately.
 
\begin{thrm}  There are  4 biregular models  of  log Del Pezzo surfaces of Fano index 1 with   baskets of  rigid singularities in Table \ref{tab-P2xP2} such that a general member of each family in each model  is  wellformed and  quasismooth. Their equations are described by $2\times 2$ minors of the order 3 matrix of homogenous forms giving the embedding of each family   $\sX\into \PP^6(a,\ldots,g)$. At least, one of these biregular models does not admit a $\QQ$-Gorenstein deformation to a toric variety. 
\label{thrm-pxp1} 
 \end{thrm}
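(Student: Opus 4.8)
The plan is to follow the general strategy outlined at the end of \S\ref{S-Background} and already executed for Theorem \ref{thrm-Pf}, adapting it from the Pfaffian ($\wgr$) setting to the $\wP = \w(\PxP)$ setting. First I would set up the key variety $\wP$ with its weighted torus action, recall from \cite{Sz05} the shape of its $2\times 2$-minor equations and its Hilbert series, and record the graded free resolution of the coordinate ring; this gives, via the numerical data $\sum a_i \le q + I$ with $I = 1$, a finite search over candidate weighted projective spaces $\PP^6(a,\dots,g)$ and candidate weight matrices $w$ for the $\Cstar$-action. For each surviving numerical candidate I would exhibit the family $\sX$ as a regular pullback from $\wP$ (equivalently, as a complete intersection inside the Segre-embedded weighted $\PxP$ or a projective cone over it, per \cite{QJSC}), which automatically gives the four models listed in Table \ref{tab-P2xP2} together with their baskets of singularities read off from the torus-fixed locus, and their invariants $-K^2$ and $h^0(-K)$ computed from the Hilbert series using the $-K^2$ formula established in \S\ref{Binomials}.

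The substantive content — and the main obstacle — is the case-by-case verification of \emph{quasismoothness} for a general member of each family, since, as the introduction stresses, there is no clean Fletcher-type criterion in codimension $4$. Concretely, I would cover $\PP^6(a,\dots,g)$ by the standard affine charts $\{x_i \ne 0\}$, and on each chart restrict the affine cone $\widehat{\sX}$ and check that a general choice of the forms filling the $3\times 3$ matrix makes the Jacobian of the $2\times 2$ minors attain maximal rank along $\widehat{\sX}\setminus\{0\}$. The $\PxP$ geometry helps here: the minors are not independent, so I would use the resolution structure (the Eagon–Northcott-type complex for determinantal ideals of a generic matrix, specialised to this embedding) to organise which minors to differentiate, and I would handle the finitely many coordinate strata where monomials of the needed degrees may be forced to vanish — these are exactly the loci producing the orbifold points in the basket, so I must check that even there the singularity is no worse than the claimed cyclic quotient. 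Wellformedness is comparatively routine: it amounts to checking that $\sX$ meets each singular stratum of $\PP^6(a,\dots,g)$ of codimension $\ge 2$ in codimension $\ge 2$, and that $\sX$ is not contained in any coordinate hyperplane, both of which follow once quasismoothness and the dimension count are in hand.

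Finally, for the last assertion — that at least one of the four models admits no $\QQ$-Gorenstein deformation to a toric variety — I would use the obstruction detectable from the Hilbert series, exactly as in the Pfaffian case: a $\QQ$-Gorenstein deformation to a toric log Del Pezzo preserves the basket of $R$-singularities and the anticanonical degree, and for a toric surface these are constrained (the degree $-K^2$ together with the $T$-singularity/$R$-singularity data must fit a toric lattice polygon with the prescribed cyclic quotient corners). I would pick the model whose $(\mathcal{B}, -K^2)$ pair violates such a constraint — e.g. one where the Euler-characteristic/lattice-point count forced by toricity is incompatible with the computed $-K^2$ — and record the contradiction; the detailed inequality is a short numerical check once the invariants in Table \ref{tab-P2xP2} are fixed. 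The expected hard part remains the quasismoothness analysis at the special coordinate strata, since that is where the interplay between the determinantal equations and the weights of $\PP^6(a,\dots,g)$ is most delicate.
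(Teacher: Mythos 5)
Your overall architecture matches the paper's: realise each family as a (cone over a) complete intersection inside $\wP$, read off the basket from the coordinate strata via tangent/local variables, and compute $-K^2$ and $h^0(-K)$ from the Hilbert series. However, there is a genuine gap in your quasismoothness step, which is exactly where the paper concedes the difficulty lies. Your plan is to ``check that a general choice of the forms filling the $3\times 3$ matrix makes the Jacobian attain maximal rank'' on each affine chart. Bertini (Theorem \ref{Bertini}) already gives this \emph{away} from the base locus of the linear systems $|\Oh(d_i)|$ being intersected with $\wP$; on the base locus, genericity of the forms buys you nothing, and this is where non-quasismooth points can actually occur. You never say how you would control the base locus, nor — more seriously — how a chart-by-chart Jacobian computation could be carried out uniformly in the parameter $r$, given that each ``model'' is an infinite series of families indexed by $\NN$ with weights depending on $r$. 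The paper's resolution is to argue that the base locus is geometrically independent of $r$ and then to verify quasismoothness by computer algebra (\textsc{Magma}) for ranges of $r$ in each of \S\ref{S-P11}--\S\ref{S-P14}; some mechanism of this kind (or a genuinely uniform argument on the base locus) is indispensable, and your proposal is silent on it.

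On the final assertion, you take a different and considerably heavier route than the paper, and the one you sketch is not actually justified. The paper's obstruction is a one-line observation: $h^0(-K)$ is invariant under $\QQ$-Gorenstein deformation by \cite{dp-AMS}, a toric Fano always has $h^0(-K)>0$ (the origin lies in the Fano polytope), and model $\P_{14}$ has $h^0(-K)=0$, so it cannot deform to a toric variety. Your proposed incompatibility between the basket, $-K^2$, and a lattice polygon is left as ``a short numerical check'' without identifying which constraint fails for which model; as stated it is not a proof. I would replace it with the $h^0(-K)=0$ argument, which Table \ref{tab-P2xP2} makes immediate.
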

 \renewcommand*{\arraystretch}{1.2}
\begin{longtable}{>{\hspace{0.5em}}llccccr<{\hspace{0.5em}}}
\caption{biregular Log Del Pezzo Models in  $\PxP$ format where $q=r-1, s=r+1,t=r+2, z=2r-1$. } \label{tab-P2xP2}\\
\toprule
\multicolumn{1}{c}{Model}&\multicolumn{1}{c}{WPS \& Para\ }&\multicolumn{1}{c}{$\mathcal{B}$}&\multicolumn{1}{c}{$-K^2_X$}&\multicolumn{1}{c}{$h^0(-K)$}&\multicolumn{1}{c}{Weight Matrix}\\
\cmidrule(lr){1-1}\cmidrule(lr){2-2}\cmidrule(lr){3-3}\cmidrule(lr){4-5}\cmidrule(lr){6-6}
\endfirsthead
\multicolumn{7}{l}{\vspace{-0.25em}\scriptsize\emph{\tablename\ \thetable{} continued from previous page}}\\
\midrule
\endhead
\multicolumn{7}{r}{\scriptsize\emph{Continued on next page}}\\
\endfoot
\bottomrule
\endlastfoot

\evnrow $\P_{11} $&$ 
  \begin{matrix}
  \PP(1^4, r^2, z)\\  \; r \ge 2\\w=(0,0,q; 1,1,r)
  \end{matrix}$&$

\dfrac{1}{z}(1,1)
$&$ \dfrac{4 r+2}{1-2 r}              $&$ 4$&$
 \footnotesize{\begin{matrix} 1&1&r\\ 1&1&r\\ r&r& z \end{matrix}} $ \\

\oddrow $\P_{12} $&$ 
  \begin{matrix}
  \PP(1,2, r^2,s,t, z)\\  \; r=2n+1\; n\ge 1\\
  w=(0,1,q; 1,r,s)
  \end{matrix}$&$
\dfrac{1}{r}(2,q),\dfrac{1}{t}(2,s), \dfrac{1}{z}(1,1)
$&$\dfrac{2 r^2+7 r+1}{r (r+2) (2 r-1)}
 $&$ 1$&$  
 \footnotesize{\begin{matrix} 1&2&r\\ r&s&z\\ s&t& 2r \end{matrix}} $ \\

\evnrow $\P_{13} $&$ 
  \begin{matrix}
  \PP( 1,2^2,3,r^{2}, z)\\  r=2n+1,\; n \ge 1\\
  w=(0,1,q;1,2,r)
  \end{matrix}$&$
    \begin{matrix}
 \dfrac{1}{3}(1,1), \dfrac{1}{z}(1,1)\\
 2\times \dfrac{1}{r}(2,q)
 
\end{matrix}$&$  \dfrac{(2 r+3)}{3 r (2 r-1)}
              $&$1 $&$

 \footnotesize\begin{matrix} 1&2&r\\ 2&3&s\\ r&s&z\end{matrix} $ \\

\oddrow $\P_{14} $&$ 
  \begin{matrix}
  \PP( 2,3,r^{2},s,t, z)\\  r=6n-1,\; n \ge 1\\
  w=(0,1,q;2,r,s)
  \end{matrix}$&$

    \begin{matrix}
 \dfrac{1}{3}(1,1), 3\times \dfrac{1}{r}(2,q)\\
 \dfrac{1}{t}(3,r), \dfrac{1}{z}(1,1)
\end{matrix}$&$\dfrac{(r+1) (2 r+9)}{3 r (r+2) (2 r-1)}               $&$0$&$
 \footnotesize\begin{matrix} 2&3&s\\ r&s&z\\ s&t&2r\end{matrix} $ \\

\end{longtable}
The first model has been discussed in \cite{dp-CP} and its toric description has also  been provided by embedding it in a singular toric variety. In the  case of Fano index 2 we  get two biregular models which are indexed by 2  parameters giving rise to an infinite series  of one parameter biregular models in each case. More strikingly  up to adjunction number 68 we do not get a single
 quasismooth
family with rigid singularities  which is not in one of these  bi-parameterized  
models.         
       \begin{thrm}
       \label{thrm-pxp2}
 Let $(r,y) \in \NN \times \NN$ be a pair of positive integers having one the following types:   
\begin{itemize}
\item  $r=3m,\;y=3n,\;m,n \ge 2$
\item  $r=3m,\;y=3n+1,\;m,n \ge 2,\; $
\end{itemize}
Then  for each choice of input parameter $w=(0,1,q;1,2,y)$ we get a family of wellformed and quasismooth log Del Pezzo surfaces $$\sX \into \PP(1,2,3,r,s,y,z)$$ with  the weight matrix $$\left(\begin{matrix} 1&2&r\\ 2&3&s\\ y&z&w\end{matrix}\right) $$ where $q=r-1,s=r+1,z=y+1$ and $w=q+y$.   The degree of the canonical divisor  class in  terms of parameters $r$ and $y$ is given by 
$$-K_X^2= \dfrac{4 \left(r^2 (2 y+3)+r \left(2 y^2+4 y+3\right)+3 y (y+1)\right)}{3 ry (r+1)  (y+1)}.$$ The basket of orbifold points on  each family in both cases is given as follows. \end{thrm}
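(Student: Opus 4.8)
The plan is to treat these Fano index $2$ families exactly as the index $1$ families of Theorem~\ref{thrm-pxp1}, realising each $\sX\into\PP(1,2,3,r,s,y,z)$ as a regular pullback from the key variety $\wP$ modelled on the Segre embedding of $\PxP$. Concretely one forms the $3\times 3$ matrix $M=(m_{ij})$ of general homogeneous forms on $\PP(1,2,3,r,s,y,z)$ whose entry degrees are the entries of the displayed weight matrix, and sets $\sX=V(\wedge^2 M)$, the locus where $M$ has rank $\le 1$. The first step is purely combinatorial: one checks that the input datum $w=(0,1,q;1,2,y)$ produces this weight matrix through the recipe of \S\ref{Binomials}, that its entries have the form $\mu_i+\nu_j$ (so that all $2\times 2$ minors of $M$ are homogeneous), and that each entry lies in the numerical semigroup generated by $\{1,2,3,r,s,y,z\}$ (so that the generic $m_{ij}$ does not vanish identically); together with the two arithmetic conditions on $(r,y)$, which are exactly those making the resulting quotient points rigid in the sense of \S\ref{S-Background}, this gives that $\sX$ has the expected codimension $4$ and dimension $2$.

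Next I would compute the Hilbert series. Combining the Hilbert series formula for $\wP$ from \cite{Sz05} with the graded Betti numbers of the $\PxP$ determinantal format, one writes $P_{\sX}(t)=N(t)/\prod_i(1-t^{a_i})$; palindromicity of the numerator $N(t)$ reproves that $\sX$ is projectively Gorenstein, and its symmetry pins down the canonical class, yielding $-K_X=\Oh(2)$, which is ample. The claimed formula for $-K_X^2$ is then the leading coefficient read off from $P_{\sX}$, a generating-function computation in the two parameters $r$ and $y$; the same series also gives $h^0(-K_X)$ and, by inspecting which $T$-strata would be required, decides whether the family admits a $\QQ$-Gorenstein degeneration to a toric surface.

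The geometric content lies in the remaining two steps. For wellformedness one verifies, stratum by stratum in $\PP(1,2,3,r,s,y,z)$, that $\sX$ meets the singular locus of the ambient space only in finitely many points; since $\dim\sX=2$ this amounts to ruling out curves of ambient quotient singularities lying on $\sX$, which follows from the degree pattern of $M$. For quasismoothness one shows that the punctured affine cone $\widehat{\sX}\setminus\{0\}$ is smooth, by stratifying $\AA^7\setminus\{0\}$, with coordinates $x_1,x_2,x_3,x_r,x_s,x_y,x_z$, according to which coordinates vanish: on the open stratum genericity of the $m_{ij}$ makes $\sX$ smooth, while on each deeper stratum one must exhibit a $4\times 4$ minor of the Jacobian of the $2\times 2$ minors of $M$ that is a nonzero form there, using the rigid monomial shape forced on the low-degree entries of $M$ together with the general choice of the higher-degree entries. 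Finally, at each of the finitely many points where $\sX$ meets a singular stratum of the ambient space one passes to the orbifold chart, reads off the two local tangent weights, and identifies the type $\frac1n(a,b)$; carrying this out separately in the cases $y=3n$ and $y=3n+1$ -- where only the point lying on the $x_y$-axis changes type -- produces the two baskets, and the \S\ref{S-Background} criterion confirms that each point is an $R$-singularity.

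I expect the main obstacle to be quasismoothness on the deep coordinate strata, in particular on the loci where several of the heavy coordinates $x_r,x_s,x_y,x_z$ vanish simultaneously: there the restriction of $M$ can drop rank for reasons not governed by genericity alone, so whether $\sX$ picks up an unwanted curve or extra singular point depends delicately on the arithmetic of $r$ and $y$ -- for instance on whether $r$ or $r+1$ lies in the semigroup $\langle y,y+1\rangle$ -- and the argument must be split into the corresponding sub-cases. This is presumably why the Fano index $2$ situation is singled out and handled apart from Theorem~\ref{thrm-pxp1}; the observation that no further quasismooth family with rigid singularities appears up to adjunction number $68$ is computational evidence for completeness of the two biparameterised models rather than part of the proof of the present statement.
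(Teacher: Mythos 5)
Your overall route is the same as the paper's: construct $\sX$ as a regular pullback of $\wP$ with input $w=(0,1,q;1,2,y)$ (equivalently, as the paper does it, a complete intersection $X=\wP\cap(H_w)\cap(H_2)$ inside $\wP\into\PP(1,2^2,3,r,s,y,z,w)$, which replaces the two entries of degrees $2$ and $w$ of the syzygy matrix by general forms and leaves the other seven as coordinates); read off $-K_X=\Oh(2)$ and $-K_X^2$ from the adjunction/degree data of the format; and then identify the basket by a stratum-by-stratum analysis of the ambient orbifold loci, with the $y\equiv 0$ and $y\equiv 1 \pmod 3$ cases differing only on the weight-$3$ stratum. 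That part of your plan matches the paper's proof of Theorem~\ref{thrm-pxp2} step for step, and your degree and index computations ($K_{\wP}=\Oh(-(r+y+3))$, cut by degrees $r+y-1$ and $2$) are consistent with what the paper does.

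The gap is in the step you yourself flag as the main obstacle: quasismoothness away from the orbifold coordinate points, i.e.\ on the base loci of the linear systems $|\Oh(w)|$ and $|\Oh(2)|$ (equivalently, on the deep coordinate strata of the affine cone). You propose to ``exhibit a $4\times 4$ minor of the Jacobian that is a nonzero form on each stratum,'' but you do not actually produce these minors, and this is precisely the part that the paper states it cannot do purely theoretically when the base locus is positive-dimensional. The paper's actual argument here is two-pronged: it shows that the base locus is \emph{geometrically constant} as the parameters $(r,y)$ vary (following the template of \S\ref{S-Pf12}), and then verifies quasismoothness by computer algebra for $6\le r,y\le 63$, concluding for all $(m,n)$. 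Without either carrying out your Jacobian-minor analysis on the strata where several of $x_r,x_s,x_y,x_z$ vanish simultaneously, or substituting the paper's ``constant base locus plus finite verification'' argument, the claim that a \emph{general} member is quasismooth is not established. Everything else in your outline (wellformedness, the basket, the $-K^2$ formula, and the observation that the third arithmetic type $r=3m+1$, $y=3n$ is redundant by the diagonal symmetry of the weight matrix) is either correct or routine.
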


\begin{table}[H]
\renewcommand*{\arraystretch}{1.5}
\[
\begin{array}{c|c}
\toprule
\textrm{Model} &   \textrm{Basket}    \\
\cmidrule(lr){1-1}\cmidrule(lr){2-2}
\evnrow \P_{21} & 
 
    \begin{matrix}
\frac{1}{3}(1,1),\dfrac{1}{r}(1,1),\frac{1}{s}(3,r),\frac{1}{y}(1,1),\frac{1}{z}(3,y)
\end{matrix}\\

\oddrow \P_{22}  &
               
    \begin{matrix}
\frac{1}{r}(1,1),\frac{1}{s}(3,r),\frac{1}{y}(1,1),\frac{1}{z}(3,y)
\end{matrix}  \\

\bottomrule
\end{array}
\]
\end{table}
It is important to mention that the computer search gives another model with $r=3m+1$ and $y=3n$ but due to symmetry of the weight matrix along the diagonal it is isomorphic to $\P_{22}$.

In \S\ref{Sporadic}    we gave a summary of computational results obtained by using the computer search  in each of these two formats. The summary consists of a number of candidate families returned by computer, how many of those  contain only rigid singularities and how many of them are quasismooth.    We finish by  presenting the complete list of sporadic cases of  families of wellformed and quasismooth  log  Del Pezzo surfaces appearing in these formats,  up to certain adjunction number.
 
 \subsubsection*{Understanding Table \ref{tab-Pf} and \ref{tab-P2xP2}} The first column denotes the model name where $\Pf_{ij} $ ($\P_{ij}$) represents $j$-th model of index $i$. The second column contains the weights of the ambient weight projective space containing $X$ and two types of parameters: the first one tells us the form of the parameter $r$ and the second one gives the weights of the syzygy  matrix  appearing in the  last column of the table. The  3rd column contains the basket of singularities on  $X$. The column $-K^2$  represents  the degree of the canonical class \(-K_X\)  of $X$ in terms of parameter $r$ and $h^0(-K)$ contains the first plurigenus  of its Hilbert series. The last column contains the so called syzygy or weight matrix of $X$. 
 
\subsection*{Acknowledgement} I wish to thank     Alexander Kasprzyk   for some enlightening discussions which made me think about this project. I am also thankful to Gavin Brown and Yuri Prokhorov for helpful comments.  Thanks are also due to Nouman Zubair for setting up \textsc{Magma} on the HPC cluster of LUMS to run the computer calculations for this paper. Last but not least, I am grateful to an anonymous referee for helping me improve an earlier version of the  paper a great deal.   This research is supported by an Higher Education Commission (HEC)'s  NRPU  grant  "5906/Punjab/NRPU/RD/HEC/2016" and a fellowship of the Alexander--von--Humboldt foundation. 

 \section{ Preliminaries, Notation and General strategy of proofs}
 \label{S-Background}

  \subsection{Preliminaries}
Let $\mu_r$ denote the cyclic group   generated by a primitive $r$-th root of unity. Its acts on \(\AA^2_{x,y}\) by \(x\mapsto \epsilon^a x, y \mapsto \epsilon^b y  \). The quotient  is called a \emph{cyclic quotient} singularity or an orbifold point of type  $\frac{1}{r}(a,b)$. It is called isolated if  $r$ is relatively prime to  $a$ and $b$.  A  singularity is called a \emph{T-singularity} if  it admits
a $\QQ$-Gorenstein smoothing. It is called  an \emph{R-singularity} it it is is rigid under
any $\QQ$-Gorenstein smoothing. We  use the following  
characterization of a  cyclic quotient singularity to be a $T$-singularity and $R$-singularity, appeared in  \cite{dp-CP}.

\begin{dfn}
\label{dfn:T_R}
Given an arbitrary quotient singularity $Q = \frac{1}{r}(a,b)$, let  $m = \text{gcd}(a+b,r)$, $d = (a+b)/m$ and $k=r/m$. Then $Q$ can be written in the form $\frac{1}{mk}(1,md-1)$ and if :
\begin{enumerate}
\item  $k \mid m$ then $Q$ is  a  \emph{T}-singularity~\cite{Kollar-SB};
\item  $m<k$ then $Q$ is  a \emph{R}-singularity~\cite{AK-SC} .
\end{enumerate}
\end{dfn}
\noindent An algebraic surface is $\QQ$-Gorenstein if it is normal and the canonical divisor class is a \(\QQ \)-ample Weil divisor. 
A $\QQ$-Gorenstein algebraic surface    $X$ is a \emph{del~Pezzo surface} if the anti-canonical divisor class $-K_X$ is ample. If $X$ has at worst only  isolated quotient singularities then it is called \emph{orbifold or log Del Pezzo surface}.   The largest positive integer $I$ such that $-K_X = I\cdot D$ for some element $D $ in the divisor class group of $X$, is known as the \emph{Fano index} of a log Del~Pezzo surface $X$.
\begin{dfn}
A \emph{biregular model of log Del Pezzo surfaces} is  an infinite series of families of log  Del Pezzo surfaces satisfying the following conditions.  
\begin{enumerate}

\item  There exist a family of  log Del Pezzo surfaces for each value of the parameter $r(n)$ for all $n \in \NN$. 
\item Each family has the embedding  $\sX\into  \PP(w_i)$ such that at least one of the weights is $r$, and    each weight $w_i$  and the degree of the canonical class $(-K_\sX)^2$ are  functions of $r$.  

\end{enumerate}
\end{dfn}
We may only use the word ``model'' and  ``biregular model'' interchangeably if no confusion can arise. A biregular model is called \emph{wellformed and quasismooth model}  if a general member in each family is wellformed and quasismooth log  Del Pezzo surface. A surface $X\subset \PP(w_i) $ is quasismooth if the affine cone  $\widetilde{X}$ over $X$ is smooth outside the vertex  $\underline{0}$ and wellformed if at worst it contain the isolated orbifold points. We use the algorithmic approach of \cite{BKZ,QJSC} to search for the candidate biregular models which is primarily based on a theorem of  Buckley, Reid and Zhou \cite{BRZ}. The theorem gives a decomposition of the Hilbert series $P(t)$ of a projectively Gorenstein orbifold $X$  with isolated orbifold points into a smooth and orbifold part. The Gorenstein assumption on a surface $X$ with  $K_X=I\cdot D$  
 implies that  $\frac 1r (a,b)$  must satisfy \begin{equation}\label{PG}a+b+I= 0 \mod r.\end{equation} 
\begin{lema}Let $X$ be a log Del Pezzo surface of Fano index $1\le  I\le 2$, then the orbifold point $Q=\frac 1r(a,b)$ is a $T$-singularity if  it is either \(\frac 12(1,1) \text{ or }\frac 14(1,1)\). Otherwise, it is an $R$-singularity.
\end{lema}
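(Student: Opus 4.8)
The plan is to use Definition~\ref{dfn:T_R} directly, specializing to the case where the orbifold is a log Del Pezzo surface of Fano index $I \in \{1,2\}$, so that the Gorenstein constraint \eqref{PG} forces $a+b+I \equiv 0 \pmod r$. First I would write $Q = \frac1r(a,b)$ and set $m = \gcd(a+b,r)$, $d = (a+b)/m$, $k = r/m$ as in the definition. The condition \eqref{PG} says $r \mid a+b+I$, equivalently $a+b \equiv -I \pmod r$; since $m \mid r$ we get $m \mid a+b+I$ and hence $m \mid I$ because $m \mid a+b$. So $m \in \{1,2\}$ when $I \le 2$ (and $m = 2$ can only occur when $I = 2$). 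By Definition~\ref{dfn:T_R}, $Q$ is a $T$-singularity precisely when $k \mid m$; since $k = r/m \ge 1$, the divisibility $k \mid m$ with $m \in \{1,2\}$ leaves only a handful of possibilities, and in all other cases $m < k$, giving an $R$-singularity.

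Next I would enumerate the $T$-cases. If $m = 1$, then $k \mid m$ forces $k = 1$, i.e. $r = m = 1$, which is the smooth point and not an orbifold point at all, so this contributes nothing. If $m = 2$ (only when $I = 2$), then $k \mid 2$ gives $k \in \{1,2\}$, i.e. $r = mk \in \{2,4\}$. For $r = 2$: $m = 2$, $k = 1$, $d = (a+b)/2$; the normal form is $\frac1{2}(1, 2d-1)$, and since we need $\gcd$ with $2$ to be $1$ in each slot for an isolated point, $2d-1$ is odd, so $Q \cong \frac12(1,1)$. For $r = 4$: $m = 2$, $k = 2$, $d = (a+b)/2$ with $a+b \equiv -2 \equiv 2 \pmod 4$, so $d$ is odd, and the normal form $\frac1{4}(1, 2d-1)$ with $2d-1 \equiv 1 \pmod 4$ gives $Q \cong \frac14(1,1)$. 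Thus the only $T$-singularities arising on a log Del Pezzo surface with $1 \le I \le 2$ are $\frac12(1,1)$ and $\frac14(1,1)$, and conversely one checks these two do satisfy $k \mid m$ (with $I = 2$), so they are genuinely $T$-singularities in this setting.

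Finally I would note the converse direction of the statement: if $Q$ is not one of $\frac12(1,1)$ or $\frac14(1,1)$, then by the above we cannot have $m = 2, k \in \{1,2\}$ and we cannot have $m = k = 1$ (non-isolated), so necessarily $m < k$, and Definition~\ref{dfn:T_R}(2) says $Q$ is an $R$-singularity. One subtlety to handle carefully is the interplay between the index $I$ and which of $I = 1$ or $I = 2$ is in force: when $I = 1$ we have $m = 1$ forced, hence $k = r > 1$ for any actual orbifold point, so \emph{every} orbifold point is an $R$-singularity; the two exceptional $T$-cases genuinely require $I = 2$. The main obstacle is not conceptual but bookkeeping: one must verify that the normal-form reduction $Q = \frac1{mk}(1, md-1)$ is compatible with the isolatedness hypothesis ($\gcd(md-1, mk) = 1$) in each of the small cases, so that the ambiguity in $(a,b)$ does not produce a spurious family; this is a short check but needs to be written out to be rigorous.
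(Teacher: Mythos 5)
Your proof is correct and follows essentially the same route as the paper's: a direct application of Definition~\ref{dfn:T_R} combined with the Gorenstein congruence \eqref{PG}, which forces $m=\gcd(a+b,r)$ to divide $I$ and reduces everything to the cases $m\in\{1,2\}$, with $k\mid m$ only possible for $r\in\{2,4\}$ at index $2$. Your write-up is in fact a bit more careful than the paper's (you verify the normal forms $\frac12(1,1)$ and $\frac14(1,1)$ explicitly and handle isolatedness), but no new idea is involved.
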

\begin{proof}The proof follows from a straightforward application of  \ref{dfn:T_R}. If  \(m=\gcd(a+b,r)\) then for  $ I=1$  there are no orbifold point of type $\frac 12 (1,1)$ and $\frac 14(1,1)$   on $X,$ due to \eqref{PG}. Otherwise, we have $a+b=r+1$ thus $m=1<k=r$. If the Fano index \(I=2\) then if $r$ is odd then $m=1<k=r$.  Otherwise, if $r$ is even then $m=2$ and $k=r/2$. Now $k \mid m$ if $r=2,4$ otherwise  $m <k$.  
\end{proof}
In our proofs and calculations we repeatedly use the following Lemma to compute the exact number of singular points on orbifold loci of our varieties.
\begin{lema}\cite[Lemma 9.4]{fletcher}
\label{lemma-fletcher}
Let $X\subset \PP(a_0,a_1)$ be a general hypersurface of degree $d$ with $\gcd(a_{0},a_1)=1$. If $P_0 \text{ and }P_1$ denote the coordinate points $(1,0)$  and $(0,1)$ respectively, then $X$ is a finite set such that $P_i\in X$ if $a_j \nmid d$ for any $j=0,1$ and it contains \(\lfloor\frac d {a_0a_1}\rfloor\) further  points. 
\end{lema}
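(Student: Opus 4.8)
\emph{Plan.} Since $\PP(a_0,a_1)$ is one dimensional, a general weighted form $f=\sum_{a_0 i+a_1 j=d} c_{ij}\,x_0^i x_1^j$ of degree $d>0$ is not a nonzero constant, so its zero scheme $X=\{f=0\}$ is a proper closed subset, hence finite; this gives the ``finite set'' claim for free. The plan is to reduce the two quantitative assertions to an elementary root count on the two affine charts $U_i=\{x_i\neq 0\}$. The key structural point, which uses $\gcd(a_0,a_1)=1$, is that each $U_i$ is a \emph{smooth} affine line: setting $x_0=1$ and taking invariants under the residual $\mu_{a_0}$-action gives $U_0=\operatorname{Spec}\CC[x_1]^{\mu_{a_0}}=\operatorname{Spec}\CC[u]$ with $u=x_1^{a_0}$ and $P_0=\{u=0\}$, and symmetrically for $U_1$; the torus $U=U_0\cap U_1$ is then $\cong\CC^{\times}$ with coordinate $w=x_0^{a_1}/x_1^{a_0}$.

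First I would settle the coordinate points. The pure power $x_0^{d/a_0}$ occurs in $f$ exactly when $a_0\mid d$, in which case $f(P_0)=c_{d/a_0,0}\neq 0$ for general coefficients, so $P_0\notin X$; conversely, if $a_0\nmid d$ then every monomial of $f$ is divisible by $x_1$, whence $f(P_0)=0$ and $P_0\in X$. The same argument for $x_1$ gives the stated criterion: $P_i\in X$ if and only if $a_i\nmid d$.

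Next I would count the remaining points by restricting $f$ to $U_0$ (which contains all of $U$). Coprimality yields a unique residue $j_0\in\{0,\dots,a_0-1\}$ with $a_1 j_0\equiv d\pmod{a_0}$, and the exponents of $x_1$ appearing in $f(1,x_1)$ are precisely $j_0,j_0+a_0,j_0+2a_0,\dots$; hence $f(1,x_1)=x_1^{j_0}\,g(u)$ with $u=x_1^{a_0}$ and $g$ a genuine polynomial in the chart coordinate $u$. The zeros of $X$ on $U_0$ are $u=0$ (the point $P_0$, present iff $j_0>0$, i.e.\ iff $a_0\nmid d$) together with the roots of $g$, which for general coefficients are simple and nonzero; their number is $\deg_u g$. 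Thus the number of ``further'' (torus) points equals $\deg_u g=\lfloor (d-a_1 j_0)/(a_0a_1)\rfloor$.

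The delicate step is identifying this quantity with $\lfloor d/(a_0a_1)\rfloor$. I would pin it down through the intrinsic degree identity $\deg_{\mathcal{O}(1)}X=d/(a_0a_1)$ on $\PP(a_0,a_1)$, decomposing the total degree as the fractional contributions $j_0/a_0$ and $j_1/a_1$ of the two orbifold coordinate points (where $j_1\in\{0,\dots,a_1-1\}$ is the analogous residue with $a_0 j_1\equiv d\pmod{a_1}$) plus one for each smooth torus point. Using the congruence $a_1 j_0+a_0 j_1\equiv d\pmod{a_0a_1}$ together with the bounds $0\le j_0<a_0$ and $0\le j_1<a_1$ isolates the integer part and yields the floor. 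This is the only place where the orbifold structure at $P_0,P_1$ enters nontrivially and where generality of $f$ and coprimality are genuinely used; everything else is the elementary chart bookkeeping above.
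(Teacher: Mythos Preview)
The paper does not prove this lemma; it is simply quoted from Fletcher, so there is no proof in the paper to compare against.

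Your approach is sound and the first three steps are correct: finiteness is immediate, the coordinate-point criterion is exactly the pure-power argument you give, and the chart computation on $U_0$ correctly yields $f(1,x_1)=x_1^{j_0}g(u)$ with the number of torus points equal to $\deg_u g=\big\lfloor (d-a_1 j_0)/(a_0 a_1)\big\rfloor$ for general coefficients.

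The gap is precisely at the step you yourself call ``delicate'': the identification $\big\lfloor (d-a_1 j_0)/(a_0 a_1)\big\rfloor=\big\lfloor d/(a_0 a_1)\big\rfloor$ is \emph{not} valid in general, and your sketch does not establish it. The bounds $0\le j_0<a_0$ and $0\le j_1<a_1$ give only $0\le a_1 j_0+a_0 j_1<2a_0 a_1$; together with the congruence $a_1 j_0+a_0 j_1\equiv d\pmod{a_0 a_1}$ this leaves \emph{two} possible values, so the fractional contribution $j_0/a_0+j_1/a_1$ may lie in $[1,2)$ rather than $[0,1)$. A concrete counterexample: $a_0=2$, $a_1=3$, $d=7$. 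The only monomial of degree $7$ is $x_0^2x_1$, so a general $X$ is set-theoretically $\{P_0,P_1\}$ with \emph{zero} further points, whereas $\lfloor 7/6\rfloor=1$. Here $j_0=1$, $j_1=2$ and $a_1 j_0+a_0 j_1=7>6$.

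So the statement as reproduced in the paper is not quite correct in full generality; what your argument actually proves is that the number of further points is $N(d)-1$, with $N(d)$ the number of weight-$d$ monomials. This agrees with $\lfloor d/(a_0 a_1)\rfloor$ exactly when $a_1 j_0+a_0 j_1<a_0 a_1$, in particular whenever $a_0\mid d$ or $a_1\mid d$. In every place the paper actually invokes the lemma one is, after cancelling common factors, working in a $\PP(1,n)$ or $\PP(1,1)$, so the discrepancy never arises in those applications.
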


\subsection{Notation }
 
\begin{itemize}

\item We work over the field of complex numbers \(\CC\). We write $\sX$ for a family of Del Pezzo surfaces and $X$ for its general member.

\item Any isolated orbifold point $\frac 1r(a,b)$ can be written in the form
$\frac 1r(1,b')$ by using a different primitive generator of $\mu_r$, We
use the latter form of the orbifold points in all tables and examples.  We use the term quotient singularity and orbifold point interchangeably.
\item  All our orbifolds are projectively Gorenstein so each orbifold  point
of type $\frac 1r (1,b)$ has a presentation which satisfies the condition \eqref{PG}.
\item  We use $\wG$ to  denote the weighted Grassmannian $\wgr$ and $\wP$
will denote the ambient weighted $\PxP$ variety. 
\item $a,b,c,d,e,f$ and $g$ represent the   variables on the ambient
weight projective space containing a Del Pezzo surface $X$ whereas $m,r,s,t,u,v,y, \textrm{ and }z$
denote the weights of the variables depending on the model. The subscripts
will denote the degree of these variables in the proofs.  
\item The capital letters like  $H_ds \text{ and } J_ds$ (or only \((d)\) )  denote  homogeneous  forms
of degree $d$. 
\item We enclose the matrix of weights inside parentheses  $\left(\right)$   and matrix of variables and homogeneous forms inside square brackets $\left[\;\right]$ in the proofs. If we need to distinguish between two weights of same degree in the weight matrix then we  distinguish them with subscript, e.g. for example if we have two weights of degree $z$ then we denote them by $z_1, z_2$ in the weight matrix.

\end{itemize}  


\subsection{General strategy of the proofs}
\label{proof-strategy}
For each model, and the sporadic examples of \S\ref{Sporadic}, the proofs are divided into the following steps.
\subsubsection{Existence}The first  part is to show the existence of such models. We show the existence of such models by  constructing them as quasilinear sections of the given ambient key variety $\wgr$ or $w(\PxP)$ by specifying the choice of input parameters and quasilinear sections. The most important part of the existence is to show that each family of Del Pezzo orbifolds $\sX$ contains exactly those singular points which are suggested by the output in the computer search. A family may fail when it does not contain a suggested orbifold point, or sometimes when it contains 1-dimensional orbifold singularities.
\subsubsection{Wellformedness and Quasismoothness}  The wellformedness part is quite simple and follows straight away from the existence of models with right singularities in this case. If  $X\subset \PP(a_0,\cdots,a_n)$ is an orbifold Del Pezzo surface then
the orbifold singularities on $X$ occur due to the singularities of $\PP(a_0,\cdots,a_n)$.
 We start by computing the dimensions of each orbifold locus of $\PP(w_i)$ restricted to $X$ which
answers the question of wellformedness. Since we are on a surface,  $X$
is wellformed if and only if, at worst, it intersects in finite number of points with the singular strata of $\PP(w_i)$.

 The quasismoothness needs some detailed and careful analysis of two different types of  loci.
 One comes from the  singularities of ambient weighted projective space. The  second one may appear due to the base loci of the successive  linear systems of the intersecting weighted homogeneous forms. Outside of these loci, a general member in each family of these models  remain quasismooth due to the following version of Bertini's theorem.  
 \begin{thrm}[Bertini] 
 \label{Bertini}
 If a hypersurface  $ X \subset \PP(a_0,\ldots,a_n)$ is  a general element of a linear system  $L = |\Oh(d)|$, then the singularities (non-quasismooth points) of $X$
may only occur on the reduced part of the base locus of $L$. 
  \end{thrm}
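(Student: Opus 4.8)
The plan is to follow the scheme of \S\ref{proof-strategy} (existence, wellformedness, quasismoothness, invariants, basket), the new feature being that two parameters $r,y$ run simultaneously, so every estimate has to be made uniform in both. First I would realise each member as a regular pullback from the weighted $\PxP$ key variety $\wP$ in the sense of \cite{BKZ,QJSC}: take the $3\times3$ matrix of general forms on $\PP(1,2,3,r,s,y,z)$ whose degrees are the entries of the weight matrix,
\[
M=\begin{bmatrix}(1)&(2)&(r)\\(2)&(3)&(s)\\(y)&(z)&(w)\end{bmatrix},\qquad \sX=\{\rank M\le1\},
\]
cut out by the nine $2\times2$ minors. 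Since the key variety is $4$-dimensional and projectively Gorenstein, the general pullback is a surface with the same graded free resolution, which I would use for the numerics and for adjunction. The one existence point needing an argument is that the total base locus $\{M=0\}$ is just the vertex: the seven coordinates of degrees $1,2,3,r,s,y,z$ occur as the leading monomials of $M_{11},M_{12},M_{22},M_{13},M_{23},M_{31},M_{32}$ respectively, so $M(v)=0$ forces $v=0$, and there is no rank-$0$ stratum away from the origin.

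Next I would read off the basket by intersecting $\sX$ with the singular strata of $\PP(1,2,3,r,s,y,z)$; on a surface this simultaneously settles wellformedness, since I only need these intersections to be finite. The coordinate points $P_r,P_s,P_y,P_z$ lie on $\sX$ (at each the substituted matrix has a single nonzero entry, hence rank $1$), and a local analysis via \ref{dfn:T_R} identifies them with $\tfrac1r(1,1),\tfrac1s(3,r),\tfrac1y(1,1),\tfrac1z(3,y)$ and certifies each as an R-singularity. The decisive case, which separates $\P_{21}$ from $\P_{22}$, is the $\mu_3$-stratum. When $y=3n$ the weights $3,r,y$ are all divisible by $3$, the stratum is the plane $\PP(3,r,y)$, and $M$ restricts to a matrix whose only nonzero entries sit in positions $(1,3),(2,2),(3,1)$ with the $(2,2)$ entry equal to $v_3$; the rank-$\le1$ condition forces at most one of these to be nonzero, so apart from $P_r,P_y$ one obtains the finitely many points $\{M_{13}=M_{31}=0,\ v_3\ne0\}$, each a genuine $\tfrac13(1,1)$ point, counted by Lemma \ref{lemma-fletcher}. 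When $y=3n+1$ the weight $y$ is no longer divisible by $3$ while $w=q+y$ becomes divisible by $3$, so the extra entry $M_{33}$ of degree $w$ now survives on $\PP(3,r)$; the rank condition then forces $v_3=0$, the $\tfrac13(1,1)$ points disappear, and only $P_r$ remains. Making this dichotomy rigorous, and ruling out any one-dimensional orbifold locus, is the heart of the basket computation.

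For quasismoothness I would use the Bertini statement \ref{Bertini} in its pullback form: away from the singular strata a general $\sX$ is quasismooth, and since $\{M=0\}=\{0\}$ the only candidate non-quasismooth points lie on the strata analysed above. The hard part will be the local transversality at these coordinate points and along the one-dimensional edges joining them: at each such point one must verify that the differentials of the nine minors still span the four-dimensional conormal space of the Segre cone, which reduces to the presence of suitable adjacent monomials in the general entries of $M$, and this must hold uniformly for all $m,n\ge2$ in both families. Finally the invariants come from the Hilbert series: feeding the numerator of $\wP$ from \cite{Sz05} together with the seven ambient weights into $P_{\mathcal X}(t)$ and using the Gorenstein symmetry (Fano index $I=2$, so $-K_{\sX}=\Oh(2)$ is ample and $\sX$ is log del Pezzo), I would extract $-K_X^2$ from the pole of $P_{\mathcal X}(t)$ at $t=1$ and simplify, with $q=r-1,\ s=r+1,\ z=y+1,\ w=q+y$, to the stated rational function of $r$ and $y$.
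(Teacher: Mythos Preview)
Your proposal does not address the stated theorem. The statement you were asked to prove is the weighted Bertini theorem (Theorem~\ref{Bertini}): a general hypersurface in $|\Oh(d)|$ on $\PP(a_0,\dots,a_n)$ can fail to be quasismooth only along the reduced base locus of the linear system. The paper does not supply a proof of this---it is quoted as a classical result and used as a tool in the quasismoothness analyses of \S\ref{proof-strategy} and the subsequent model-by-model proofs.

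What you have written is instead a sketch of the proof of Theorem~\ref{thrm-pxp2}, the bi-parameterized index-$2$ $\PxP$ models $\P_{21}$ and $\P_{22}$. Your discussion of the rank-$1$ locus of a $3\times3$ matrix of forms, the dichotomy on the $\mu_3$-stratum according to whether $y\equiv 0$ or $y\equiv 1\pmod 3$, the basket identification at $P_r,P_s,P_y,P_z$, and the Hilbert-series computation of $-K_X^2$ all pertain to that theorem, not to Bertini. Moreover, even as a proof of Theorem~\ref{thrm-pxp2} your outline invokes Theorem~\ref{Bertini} (``I would use the Bertini statement~\ref{Bertini} in its pullback form''), so it certainly cannot serve as a proof of it.

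If the intent was genuinely to prove the Bertini statement, the argument required is entirely different and has nothing to do with $\wP$, weight matrices, or baskets: one works on the affine cone, linearises the family over the parameter space of degree-$d$ forms, and applies generic smoothness (characteristic zero) to the incidence variety, observing that over any point not in the base locus the fibre is an affine hyperplane in the parameter space. None of this appears in your proposal.
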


If $p$ is an orbifold point of type $\frac{1}{r}(a,b)$ then it is either a coordinate point or lies on the  strata of dimension $\ge 1$. Usually $p$ is a coordinate point and    we show that  there exist $l$ weighted homogenous equations of the form 
$$F_k:=x_i^mx_k+\cdots $$
 in the ideal  of   $X$  where  $l$ is the codimension of $X$,  $x_i$ is the $i$-th coordinate point with weight $a_i:=r$ and the remaining two variables have weights $a$ and $b$ modulo $r$. These $l$ equations are called tangent polynomials \cite{BZ}. We call the variables $x_k$  the tangent variables  and rest of the variables   local variables near the point  $p$.  

Since we  consider the description of these  models  of log  Del Pezzo surfaces as complete intersections in some ambient $\w\Gr(2,5)$ or $\w(\PxP)$, i.e. a general memeber $X$ is of the form 
$$X=\left(\wG \text{ or }\wF\right) \cap \left(\cap (d_i)\right)\subset \PP(w_i).$$    There is a base locus of the linear  systems \(\left(|\Oh(d_i)|\right)\) of each form of degree $d_i$ where $X$ may have singularities  by Theorem \ref{Bertini}.  To prove that  $X$ is    quasismoothness on  the base locus, 
in few cases we use a purely theoretical arguments and mostly, a combination of theoretical and computational evidence. A theoretical arguments works well if $X$ intersect with the base loci in finite number of points.   Otherwise the base loci is very complicated and it becomes very difficult to show the quasismoothness theoretically. In such cases, we  show  that the base locus  remains geometrically does not change for each value of the parameter $r$.  We give a detailed proof in one of the cases, i.e. in section \ref{S-Pf12} and  rest of the cases are similar. Thus it suffices to show that $X$ is quasismooth  for first few values  of $r$, to establish quasismoothness for a given model.  For small values of $r$, we show the quasismoothness by using the computer algebra system $\textsc{Magma}$ by writing down its equations over the rational numbers. To provide further evidence, we verify it for    as large values of $r$  as computationally feasible by computer algebra. We write down the largest value of parameter which we checked by computer algebra, in each case separately in the proofs.  
   \subsubsection{Non existence of deformations to a toric variety}Each family in our  models of orbifold Del Pezzo surfaces is locally $\QQ$-Gorenstein rigid. From \cite{dp-AMS}, we know that   $h^0(-K)$ (more generally all  plurigenera $h^0(-mK))  $  is invariant under $\QQ$-Gorenstein deformations. Now if $X_\Si$ is a toric Fano variety then $h^0(-K)>0$ since the  origin is always contained in the corresponding Fano polytope. Thus if each family in our model has  $h^0(-K)=0$ then the we say that the  model does not admit a $\QQ$-Gorenstein deformation to a toric variety.  Such deformation families are more interesting in a sense that they can not be constructed using toric methods.  In fact two of our models do admit   \(\QQ\)-Gorenstein deformation to  toric varieties, these appeared in  \cite{dp-CP}. But  we do not treat this question in this paper for all our orbifold models.

\section{Pfaffian models}
\label{Pfaffians}
\subsection{Generalities on $\wgr$}\label{basic-pfaffians}This part mainly consists of the selected material from \cite{wg} where the detailed treatment of the subject can be found.  In the rest of this section we denote it by \(\wG\).
\begin{dfn}
 Consider a 5 tuple of  all integers or  half integers $w:=(w_1,\cdots,w_5)$   such that $$w_i+w_j>0,\; 1\le i<j\le 5,$$ Then the $\wG$ is the quotient of punctured affine cone $\widetilde{\Gr \backslash\{\ 0\}} $ by $\CC^\times$: 
   $$\la:x_{ij}\mapsto \la^{w_i +w_j }x_{ij}$$ 
  where \(x_{ij}\) are Pl\"ucker coordinates of the embedding \(\Gr(2,5)\into \PP\left(\bigwedge^2 \CC^5\right).\) Thus we get the
embedding 
$$\wG\into \PP(\{w_i+w_j: 1\le i <j\le 5\}),  $$which is defined by the  5 maximal  Pfaffians of $5\times 5$ skew symmetric.\end{dfn} We refer to this skew symmetric matrix as weight matrix and usually present by only writing down the upper triangular part, 
   $$\left(\begin{matrix} 
w_{12}&w_{13}&w_{14}&w_{15}\\ 
&w_{23}&w_{24}&w_{25}\\ 
&&w_{34}&w_{35}\\ 
&&& w_{45} \end{matrix}\right)
,$$
where $w_{ij}=w_i+w_j$.
The Hilbert series of $\wG$ is given by $$P_{\wG}=\dfrac{ 1-\displaystyle\sum_{i=1}^5t^{d-w_i}+\sum_{i=1}^5t^{d+w_i}-t^{2d}}{\displaystyle\prod_{i,j}(1-t^{w_i+w_j})},$$
where $d=\sum w_i$. If $\wG$ is wellformed, the orbifold canonical canonical class is $$K_\wG=\Oh(2d-\sum_{i,j} w_i+w_j)=\Oh(-2d).$$  The degree of  weighted Grassmannian $\wG$ is 
\begin{equation}
\deg\wG\ =\ \dfrac{\displaystyle\binom {2d}3-\sum_{i=1}^5\binom{d+w_i}3+\sum_{i=1}^5\binom{d-w_i}3}
 {\prod (w_i+w_j)}. 
\end{equation}
 
 Then obviously if $X=\wG \cap \left(\cap_{i=1}^4
(f_i)\right)$ is 
complete intersection Del Pezzo orbifold of index $I$ then \begin{equation}-K_X^2=I^2 \prod_{i=1}^4\deg(f_i)\deg(
\wG)  \end{equation}
The degree of each model has been computed using the above formula by using the  computer algebra software {\tt Mathematica}.


\subsection{Proof of Theorem \ref{thrm-Pf}}
We first give the proof of existence of each model with right invariants and singularities. Then we prove the  quasismoothness of the each model. Here we are taking the point of view appeared \cite{QJSC,qs-ahep,QJGP}, i.e. to consider each $X$ as weighted complete intersection of some  $\wG$. Indeed, it is a a special case of being consider  them as regular pullbacks of some key variety, like in  \cite{BKZ}. We write the proof for a general member  $X$ of a family $\sX\subset \PP(a,b,c,d,e,f)$ of orbifold Del Pezzo surfaces. In the course of the proof our parameter is $r$ and the rest of the weights in terms of $r$ are  $$\begin{array}{llll}q=r-1,& s=r+1,&t=r+2,&u=r+3,\\v=2r+1,&y=2r-2,&z=2r-1,&m=3r-2.\end{array}$$
\subsubsection{\textbf{Pfaffian Model 11}} 
This is the  simplest of Pfaffian models and quite straight forward to prove each family is  quasi sooth. The parameter is  $r=n, n \in \NN $ and   if we choose an input parameter \(w=\frac 12(1,1,1,z,z)\) we get the embedding of 6 dimensional orbifold  $$\wG \into \PP(1^3,r^6,z) \text{ with  the orbifold
canonical class } K_\wG=\Oh(-4r-1).$$  Then if we take the complete intersection of $\wG$  with  4 forms of degree $r$ then
$$X:= \wG\cap (r)^4 \into \PP(1^3,r^2,z)=\PP(a,b,c,d,e,f)$$ is a   Del Pezzo surfaces of index 1.  \\
\textit{Quasismoothness:} $ X$ has two non-trivial orbifold strata, one is of weight $z$ and the other of weight $r$. The weight $z$ locus is just a point which obviously lies on $X$. The variables $a, b,c$ serve as tangent variables and $d,e$ as local variables near  to  $f\ne 0$. Thus it is a point of type $\frac 1z(r,r)=\frac{1}{z}(1,1)$. On the other hand the weight $r$ locus is a copy of the Segre 3-fold  $\PP^1 \times \PP^2$ in $\wG$  which does not intersect $X$. \\
 The  base locus of linear system $|\Oh(r)|$ consists of just a coordinate point of weight $z$: $$\Bs\left(|\Oh(r)|\right)=(0,0,0,0,0,1),$$ 
which is  quasismooth. Thus the each member in this models  is a wellformed and quasismooth family of log Del Pezzo surfaces.    
\subsubsection{\textbf{Pfaffian model 12}} 
\label{S-Pf12}
The paramter has the form $r=2n+1, n \in \NN$  and   for an input parameter \(w=(0,1,1,q,r)\), we get   $$\wG \into \PP(1^2,2,q,r^3,s^2,z) \text{ with  } K_{\wG}=\Oh(-2(2r+1)).$$   If we take the complete intersection  of $\wG $ with two forms of degree $s$ and one form each of degree   $r$ and $q$ then 
$$X:= \wG\cap (s)^2\cap (q)\cap (r) \into \PP(1^2,2,r^2,z)=\PP(a,b,c,d,e,f)$$ is a  log Del Pezzo surfaces of Fano index 1. The equations are given by the $4\times 4 $ Pfaffians of skew symmetric matrix $$\left[\begin{matrix} a_1&b_1&H_q&d_{r_1}\\ &c_2&H_{r_2}&H_{s_1}\\ &&e_{r_3}&J_{s_2}\\ &&& f_z \end{matrix}\right],$$ where \(H\) and \(J\) denote the general form of degree  and the subscripts denotes the weights of variables and general forms.\\
\textit{Quasismoothness:} Now $ X$ has  3 different orbifold loci, having weight $z$, $r$ and 2. The weight $z$ locus is again just  a point which obviously lies on $X$. It is obviously a point of type  $\frac{1}{z}(1,1)$. 

The weight  $r$ locus can be taken as a coordinate point   $(0,0,0,0,1,0)$ of  the variable $e$. Let $$H_s=be+\cdots$$  then $a,b$ and $d$ are tangent variables. Then we get $ c $ and $f$ are local variables near this point which gives an orbifold point of type $\frac{1}{r}(2,q)$.

The weight 2 locus on $X$ is $V(cf,cH_q)\subset \PP(2,z)$ which is an empty set. 
\\
Now  we analyze the base loci of each linear system of weighted homogenous forms. We start with $$X_1=\wG\cap (s)^2\subset \PP(1^2,2,q,r^3,z);$$  the intersection of  $\wG$ with two forms of weight $s$. Then the base locus of the linear system of degree $s$ restricted to $X_1$  is $$\Bs\left(|\Oh(s)|\right)\cap X_1=\PP[q,r_1,r_2,z]\cup \PP[q,r_3,z].$$ Then  $X_1$ is quasismooth away from this locus. If we take  $X_2=X_1\cap (q)$ and then we have  $$\Bs(|\Oh(q)|)\cap X_2=\PP[r_1,r_2,z]\cup \PP[r_3,z].$$ We do not get any new base locus in this case and  $X_2$ is quasismooth away from this locus. At the end, we  get  $X=X_2\cap (r_2)$ which intersects with the base locus of $|\Oh(r_2)|$ in   two coordinate points of  $\PP[2,z]$ which are manifestly quasismooth.  Moreover, taking last two intersections reduces  $\Bs(|\Oh(q)|)$  to $\PP[r_2,z] \cup \PP[r_3,z]$. Thus $X$ is quasismooth outside this locus. The loucs geometrically does not change for all values of the parameter and we use the computer algebra to prove the quasismoothness of this model. The following \textsc{Magma} function shows the quasismoothness for any value of the parameter $r$. 

\begin{verbatim}
function Pf12(r)
rpoly := func< P,d | d ge 0 select
&+[ Random([1..7])*m : m in MonomialsOfWeightedDegree(CoordinateRing(P),d)]
else CoordinateRing (P)!0 >;
P<x12,x13,x23,x15,x34,x45>:=ProjectiveSpace(Rationals(),[1,1,2,r,r,2*r-1]);
f4:=rpoly(P,r-1);f5:=rpoly(P,r);f6:=rpoly(P,r+1);g6:=rpoly(P,r+1);

M := -AntisymmetricMatrix([
x12,
x13, x23,
f4, f5, x34,
x15, f6, g6, x45 ]);

X := Scheme(P,Pfaffians(M,4));
SX := JacobianSubrankScheme(X);
SXred := ReducedSubscheme(SX);
D:=Dimension(SXred);
return D;
end function;
\end{verbatim} 
In this model, we checked the quasismoothness for $3\le r \le 49$. 
 \subsubsection{\textbf{Pfaffian model 13}} 
\label{S-Pf13}
The parameter has the form $r=2n+1, n \in \NN$  and if we choose an input  parameter \[w=\frac 12(4-s,s-2,s,3s-6,3s-4)\] we get an embedding of   $$\wG \into \PP(1,2,r^2,s,y,z^2,2r,m) \text{ with }K_\wG=\Oh(-(7r-1)).$$ If we take the complete intersection  of $\wG $ with four forms having degrees  $z,2r,y$ and    $r$, we get a  log  Del Pezzo surface 
$$X:= \wG\cap (z)\cap(2r)\cap (y)\cap (s) \into \PP(1,2,r^2,z,m)=\PP(a,b,c,d,e,f).$$  The equations can be described by the  maximal Pfaffians of the  skew symmetric matrix 
\renewcommand*{\arraystretch}{1.1}
$$\left[\begin{matrix} 
a_1&b_2&c_r&H_s\\ 
&d_r&H_y&H_z\\ 
&&e_z&H_{2r}\\ 
&&& f_m \end{matrix}\right].$$
\textit{Quasismoothness:} We show that   $ X$ has   only four  distinct orbifold loci with weights  $m,z,r$ and 2.\\
 The weight $m$ locus is just the coordinate  point of the variable $f$ which lies on $X$. The variables  $a,b$ and $d$ can be removed by using the implicit function theorem near this point and $c,e$ are local variables near this point. Thus we get an orbifold point of type $\frac{1}{m}(r,z)$. This indeed represents a point as  $m$ is relatively prime to both $r$ and $z$.\\
  The weight $z$ locus is just an empty set as we have a term  $e^2$ in one of the defining equations of $X$.
 The equations of $X$ restricted to  weight $r$ variables is given by 

$$V(cH_{2r},cd)\subset \PP(c,d) $$ 
which is manifestly the coordinate point of variable $d$ on $X$. If $H_s=ad+\cdots$ then $a,c$ and $f$ are tangent variables and we get an orbifold point of type $\frac{1}{r}(2,q)$ on $X$.\\
The weight $2$ locus consists of intersection of  $X$ with $\PP(2,m)$ as $m$ is even. In the equations $bf \text{ and }bH_y$; the variable  $f$ does not appear in  $H_y$ due to the reason of degree so it can be at most a coordinate point of   variable $m$ which we already considered earlier. Thus we do not get any  $\frac 12$ type of singular points on \(X\).\\ 
In this case we can also show that the base locus geomtrically remains constant for any value of $r$, like in \ref{S-Pf12}.  Thus we used a computer algebra calculation and verified the quasismoothness for  all $ 3\le r\le 199 $.  
\subsubsection{\textbf{Pfaffian model 14}} 
\label{S-Pf14}
The paramter has the form $r=2n+1, n \in \NN$  and for an input  parameter \(w=\frac 12(1,3,z,z,2r+1)\) we get an embedding of   $$\wG \into \PP(2,r^2,s^3,t,z,(2r)^2) \text{ with } K_{\wG}=\Oh(-3(2r+1)).$$ Then a Del Pezzo surface  \(X\) of index 1 is a complete   intersection of  $\wG $ with two forms each of degree   $2r$ and  $s$. 
$$X:= \wG\cap (2r)^{2}\cap (s)^{2} \into \PP(2,r^2,s,t,z)=\PP(a,b,c,d,e,f).$$  The equations can be described by the  maximal Pfaffians of the  skew symmetric matrix 
$$\left[\begin{matrix} 
a_2&b_r&c_r&d_s\\ 
&H_s&J_s&e_t\\ 
&&f_z&H_{2r}\\ 
&&& J_{2r} \end{matrix}\right].$$
\textit{Quasismoothness:} We show that each  $ X$ has   only three  types of singularities with weights  $r,t$ and $z$  and no other orbifold singularities.\\
 The weight $z$ locus is  just a coordinate point and   $a,d$ and $e$ serve as  tangent variables. Thus we get   $b \text{ and } c$ as local variables near this point and we get an orbifold point of type $\frac{1}{z}(1,1)$.\\
 The weight $t$ locus is again just a coordinate point. In this case, $b,c \text{ and } f $ are tangent variables so it is an orbifold point of type $ \frac{1}{t}(2,s)$. Since $t$ is odd, this is only isolated singular point.\\ $X$ restricted to weight $s$ locus is  an empty set as we have a pure power of $d$ appearing in the equations of $X$. 
 The equations of $X$ restricted to  weight $r$ variables is manifestly a cubic in $\PP^1$ given by 

$$V(bJ_{2r}-cH_{2r})\subset \PP(b,c). $$ 
   If $H_{2r}=cb+\cdots, \text{ and }J_{2r}=bc+\cdots,$ then on each affine piece we can easily show that the local variables are of weight $2$ and $ z$ which gives us the 3 points of type  $\frac{1}{r}(2,q)$ on $X$.\\
At the end 
the weight $2$ locus,  $X \cap \PP(2,s)$ as $s$ is even, does not intersect $X$.\\
Thus each $ X$ contains correct type of orbifold singularities. \\ 
 In this case we used computer algebra to verify the quasismoothness for  $ 3 \le r \le 199, 1161\le r\le 1199  $  and $11161 \le  r \le 11199$. Three different ranges were chosen to further assert the verification of quasismoothness. 
\subsection*{Index 2 models}
The proof for index 2 cases are very similar to index 1 models. Therefore, we will give short summary of the quasismoothness on the orbifold locus  in a tabular form  to illuminate all the properties of the proof.  We list the details of each orbifold loci in a small tabular form by writing down the tangent  variables and the local variables in the neighbourhood of the corresponding open affine patches. Moreover, we also write the conditions needed on the intersecting weighted homogeneous forms to find all the tangent variables in each case. We will denote the weight of the singular strata under consideration  with $P_\orb$.

\subsubsection{\textbf{Pfaffian Model 21}} 
\label{S-Pf21}
The paramter has the form $r=3n, n \in \NN$  and  for the input parameter $w=(0,1,1,2,q)$
we get the ambient weighted projective variety
 $$
 \wG \into \PP(1^2,2^2,3^2,q,r^2,s) \text{ with }
K_{\wG}=\Oh(-2(r+3)).$$
Then the complete intersection  of $\wG$  with four forms of degree   $r,q,3$  and $2$  is a log Del Pezzo surface 
$$X:= \wG\cap (r)\cap(q)\cap (3)\cap (2) \into \PP(1^2,2,3,r,s)=\PP(a,b,c,d,e,f)$$ of index 2.   The equations are given  by the  maximal Pfaffians  
$$\left[\begin{matrix} 
a_1&b_1&c_2&H_q\\ 
&H_2&d_3&e_r\\ 
&&H_3&H_{r}\\ 
&&& f_{s} \end{matrix}\right].
$$

\textit{Quasismoothness:}
We summarise the  details of the orbifold loci as follows.  
$$
\renewcommand*{\arraystretch}{1.2}
\begin{array}{cccl}
P_\orb & X\cap P_\orb  & \text {Tangent}|\text{local variables}  & \text{Conditions on forms}  \\\hline
        s & \text {coordinate pt } f_s & a,b,c\;|\;d,e & H_2=c+\cdots \\
                r & \text {coordinate pt } e_r & b,c,d\;|\;a,f & H_3=d+\cdots \\
                                3 & \text {coordinate pt-} d_3 & b,e,c\;|\;a,f & H_q=d^nc+\cdots \\

        \end{array}
$$
 Since $r$ is a multiple of $3$, the  weight 3 orbifold locus is $$V(dH_r-e H_3)\subset \PP[3,r].$$ This is manifestly 2 points by using Lemma \ref{lemma-fletcher}. The one new point is the coordinate point $d$ of weight $3$. The other one already appeared as weight $r$ orbifold point. The weight 2 locus is $\PP[2,r]$ if $r$ is even and and $\PP[2,s]$ otherwise. In both cases it does not intersect $X$ as $H_2=c+\cdots$.\\ 
 Thus  $X$ is wellformed and quasismooth on the orbifold locus. In this model, we use the computer algebra to checked that $X$ is quasismooth. We verify it for \(6\le r\le 69  \) that it is quasismooth. Since the base locus remains the same for all $n$, we conclude that  $X$ is quasismooth for all values of $n$.   
 \subsubsection{\textbf{Pfaffian Model 22}} 
 \label{S-Pf22}
 This case is exactly similar to the \ref{S-Pf21} albeit our parameter is $r=3n+1$. The proof of quasismoothness is only different at weight $3$ and $2$ orbifold loci. The  weight $3$ orbifold locus is  just the coordinate point  of variable $d$, which does not lie on \(X\) as $H_q=d^n+\cdots$. Similarly the weight 2 locus is $\PP[2,r]$ if $r$ is odd  and $\PP[2,s]$ otherwise. In both cases it does not intersect $X$.\\ 
 Thus  $X$ is wellformed and quasismooth on the orbifold locus and has the right type of singularities. Just like the last case,  we used the computer algebra to verify that $X$ is quasismooth which gives a  $ 7\le r\le 70$. Since the base loci remains the same for all $r$, we conclude that this  model is quasismooth for all values of $r$.  
\subsubsection{\textbf{Pfaffian Model 23}} 
\label{S-Pf23}
The paramter has the form $r=3n+2, n \in \NN$  and for a choice of input parameter $w=(0,1,1,r,s)$
we get the ambient weighted projective variety
 $$
 \wG \into \PP(1,2,3,r,s^2,t^2,u,v)
\text{ with }K_\wG=\Oh(-4(r+2)).$$
We take the  complete intersection  of $\wG$  with four forms of degrees   $v,t,s$  and $2$, to  get  a log  Del Pezzo surface 
$$X:= \wG\cap (v)\cap(t)\cap (s)\cap (2) \into \PP(1,3,r,s,t,u)=\PP(a,b,c,d,e,f)$$ of  index 2.   The equations are given  by the  maximal Pfaffians of the  skew symmetric matrix 
$$\left[\begin{matrix} 
a_1&H_2&b_r&H_s\\ 
&c_3&d_s&H_t\\ 
&&e_t&f_{u}\\ 
&&& H_{v} \end{matrix}\right].
$$

\textit{Quasismoothness:}
The details of weight $s$ and weight $r$ orbifold point is given in the following small table.
$$
\begin{array}{cccl}
P_\orb & X\cap P_\orb \ & \text {Tangent}|\text{local variables}  & \text{Conditions}  \\
        
                u & \text {coordinate pt } f_u & a,b,d\;|\;c,e &  \\
                r & \text {coordinate pt } b_r & c,e,f\;|\;a,d & H_t=e+\cdots \\
                                3 & \text {coordinate pt } c_3 & b,d,f\;|\;a,e & H_v=c^nf+\cdots \\

        \end{array}
$$
 The orbifold strata of weight $t$ and $s$  misses $X$ as we have the pure powers  of $e$ and $d$ in the equations of $X$ if $$H_t=e+\cdots \text{ and } H_s=d+\cdots.$$ The weight 3 locus is $X\cap\PP[3,s];$  explicitly $$V(dH_s,cd)\subset \PP[c,d]$$ which  is the coordinate point of the variable $c$.  
Even though there is no variable of weight  2 in the ambient \(\w\PP\) but   $$\GCD(r,t)=2;\text{ if } r \text{ is even and }  \GCD(t,u)=2; \text{ if } r \text{ is odd  },$$  we have to calculate orbifold locus of weight $2$ which may appear on $x$. In both cases we either get coordinate point of weight $r$ or $u$ which we already accounted for, so we do not get any new orbifold point. 
For the quasismoothness on the base locus, we instead use the computer algebra system to show that $X$ is quasismooth for  $5\le r\le 299$. Since base loci remains the same, we conclude that $X$ is quasismooth for all $n$.  
 
\subsubsection{\textbf{Pfaffian Model 24}}
\label{S-Pf24}
The paramter has the form $r=3n, n \in \NN$  and for a choice of input parameter $w=\frac 12(q,s,s,u,u)$
we get the ambient weighted projective variety
 $$
 \wG \into \PP(r^2,s^3,t^4,u) \text{ with } K_\wG=\Oh(-(5r+7)).$$
We take a projective  cone of weight 3 to get a 7-fold $\mathcal{C}^3\wG$ with the canonical divisor class $\Oh(-5(r+2)).$ Then we take the intersection of this 7-fold with  two forms of  weight $t$ and one form each of weight $u,s,r$ to get  a  Del Pezzo orbifold of index 2 
$$X:= \mathcal{C}^3\wG\cap (t)^2\cap(u)\cap (s)\cap (r) \into \PP(3,r,s^2,t^2)=\PP(f, a,b,c,d,e).$$    The equations are given  by the  maximal Pfaffians of the  skew symmetric matrix 
$$\left[\begin{matrix} 
a_r&H_r&b_s&H_s\\ 
&c_s&d_t&e_t\\ 
&&H_t&J_{t}\\ 
&&& H_{u} \end{matrix}\right].
$$
\textit{Quasismoothness:}
The details of orbifold loci lying on $X$ is summarized as follows.

$$
\renewcommand*{\arraystretch}{1}
\begin{array}{c|cc|l}
P_\orb & X\cap P_\orb \ & \text {Tangent}|\text{local variables}  & \text{Conditions on forms}  \\  
\evnrow t&  \begin{matrix}
\text { 2 points by \ref{lemma-fletcher} }\\
\text{ on patch } d\\
\text{ on patch }e

\end{matrix}   & \begin{matrix}
\\
a,b,e\;|\;c,f \\
a,c,d\;|\;b,f
\end{matrix}&\begin{matrix}\\
H_r=a+\cdots,H_3=b+\cdots\\
H_t=d+\cdots,
J_t=e+\cdots
\end{matrix}  \\
                s & \text {coordinate pt } b_s & c,e,d\;|\;a,f & J_t=d+\cdots \\
\evnrow                         r & \text {coordinate pt } a_r & d,e,f\;|\;b,c & H_u=af+\cdots,H_t,J_t \text{ as for weight } t \\
                3 & \text {one new coordinate pt } f_3 & d,e,a\;|\;b,c & \begin{matrix}
H_r=f^n+\cdots, H_u=f^{n+1}a+\cdots             
\end{matrix}             
        \end{array}
$$
 The weight $t$ locus restricted to $X$ consists of 2 points. On each affine patch we get local variables of weight 3 and $s$ modulo $t$. The weight 3 locus is restricted to $X$ is given by  $$V(aH_u,H_rH_u)\subset \PP(3,r),$$ giving 2 coordinate points of weight $3$ and $r$. At the end, we may get singularities of weight 2. If $r$ is even then the weight 2 locus is 
 $$V(dJ_t-eH_t,aH_t-dH_r,aJ_t-eH_r)\subset \PP[r,t^2],$$
 which gives 3 points. But there is no new singularity as we already got 2 points of type $\frac{1}{t}(3,s)$ and a point of type $\frac{1}{r}(1,1)$. If $r$ is odd, then it is a coordinate point of weight $s$ which we already accounted for and we do not get any new singularities. In this case the compute algebra computations run very fast and we verified the quasismoothness for $6\le r \le 30,000. $
\begin{rmk}
\label{fail-Pf}
It is important to mention that  the numerical candidate examples or models do not always give rise to  quasismooth model of Del Pezzo surface. In the case of Fano index 2, if we use the   same numerical data  as in the  model $\Pf_{23}$ in  \ref{S-Pf23} 
for $r=3n+1.$ It appears to be an  another  model of families of orbifold Del Pezzo
surfaces. It  satisfies all the properties of the suggested model except quasismoothness  on one affine patch of the weight 3 locus.  We can not find 3 tangent monomials on  the
affine patch of weight $t$, so we do not include this model in our lists. 
\end{rmk}
\section{$ \PP^2 \times \PP^2$ models}
\label{Binomials}
\subsection{Generalities on $\w(\PxP)$}
We first recall the definition of weighted $\PxP$ and formula for its Hilbert Series from \cite{BKQ-p2xp2,Sz05} which we denote by $\wP$ for the rest of this article. 
\begin{dfn} For a choice of  two integer or half integer vectors $a=(a_1,a_2,a_3)$ and $b=(b_1,b_2,b_3)$ which satisfy  $$a_1+b_1 > 0,a_i\le a_j \text{ and } b_i\le b_j \text{ for } 1\le i\le j \le 3,$$we define a $\wP$ as the quotient by $\CC^\times$ of the punctured affine cone $\widetilde{\mathcal P\backslash\{0\}}$ of the Segre embedding $\mathcal {P}=\PxP\into \PP^8 $ by 
$$\la:x_{ij}\mapsto\la^{a_i+b_j}x_{ij}, \; 1\le i,j\le 3.$$
where the $x_{ij}$ are the coordinates of $\PP^8$. Thus we get the embedding of $$\wP\into \PP^8(a_1+b_1,\dots,a_3+b_3) $$ for the choice of $a,b$, written together as a single  input parameter $w=(a_1,a_2,a_3;b_1,b_2,b_3)$. The equations are defined by $2\times 2$ minors of the $3\times 3$ matrix which we usually refer to as  weighted matrix and write it as follows.
\begin{equation}\label{eq!wtmx}
\begin{pmatrix} x_{11} & x_{12} & x_{13} \\ x_{21} & x_{22} & x_{23} \\ x_{31} & x_{32} & x_{33} \end{pmatrix}
=
\begin{pmatrix} a_1+b_1 & a_1+b_2 & a_1+b_3 \\ a_2+b_1 & a_2+b_2 & a_2+b_3
\\ a_3+b_1 & a_3+b_2 & a_3+b_3  \end{pmatrix}
\end{equation}
\end{dfn}
 The Hilbert series of $\wP$ is given by $$P_{\wP}(t)=\dfrac{1-\displaystyle\left(\sum_{1\le
i,j \le 3}t^{-\al_{ij}}\right)t^{d}+\left(4+\sum_{1 \le i\ne j\le 3}t^{\al_{ji}}+\sum_{1\le
i\ne j\le 3}t^{\beta_{ji}}\right)t^{d} -\left(\sum_{i,j}t^{\al_{ij}}\right)t^{d}+t^{2d}}{\displaystyle\prod_{1\le
i,j\le 3}\left(1-t^{a_i+b_j}\right)},$$ where 
$d=a_1+a_2+a_3+b_1+b_2+b_3,\al_{ij}=a_i+b_j, \al_{ji}=a_i-a_j$ and $\beta_{ji}=b_i-b_j.  $ If $\wP$ is wellformed then the orbifold canonical class is given by $$K_{\wP}=\Oh_{\wP}(2d-\sum_{i,j}a_i+b_j)=\Oh_{\wP}(-d).$$
\begin{prop} The degree of weighted $\PxP$ variety is given by 
\begin{equation}
 \deg (\wP)=\dfrac{\displaystyle\binom {2d}4 + 4\binom d 4+\sum_{1\le i\ne j \le 3}\left(\binom{d+\beta_{ji}}4+\binom{d+\al_{ji}}4\right)-\displaystyle\sum_{1\le i,j\le
3}\left(\binom{d+\al_{ij}}4
+\binom{d-\al_{ij}}4 \right)}{\displaystyle\prod_{1\le i,j\le
3}(a_i+b_j)},
 \end{equation}
\end{prop}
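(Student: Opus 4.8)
The plan is to read the degree off the Hilbert series $P_{\wP}(t)$ exactly as was done for the weighted Grassmannian in \S\ref{basic-pfaffians}, following \cite{wg}. Since $\wP$ is four-dimensional (weighting does not change the dimension of $\PxP$), its degree --- defined, as for $\wG$, via the leading term of the Hilbert series so that it specialises to the usual projective degree --- is recovered as the limit $\deg(\wP)=\lim_{t\to1}(1-t)^{5}P_{\wP}(t)$, the exponent being $\dim\wP+1=5$.

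First I would factor the denominator. Writing $1-t^{w}=(1-t)\,g_{w}(t)$ with $g_{w}(t)=1+t+\cdots+t^{w-1}$ and $g_{w}(1)=w$, the nine factors give $\prod_{1\le i,j\le 3}(1-t^{a_i+b_j})=(1-t)^{9}\prod_{i,j}g_{a_i+b_j}(t)$, so that
$$(1-t)^{5}P_{\wP}(t)=\frac{N(t)}{(1-t)^{4}\prod_{i,j}g_{a_i+b_j}(t)},$$
where $N(t)$ denotes the numerator displayed in the Hilbert series. As $t\to1$ the product in the denominator tends to $\prod_{i,j}(a_i+b_j)$, so the whole computation reduces to evaluating $\lim_{t\to1}N(t)/(1-t)^{4}$.

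The decisive step is to check that $N(t)$ vanishes at $t=1$ to order exactly $4$, i.e. $N(1)=N'(1)=N''(1)=N'''(1)=0$. This is forced geometrically: $N(t)$ is the numerator arising from the minimal graded free resolution of the Segre coordinate ring of $\PxP\subset\PP^{8}$, whose projective codimension is $8-4=4$; hence the pole of $P_{\wP}$ at $t=1$ has order $\dim\wP+1=5$ and $N$ must absorb the remaining order $9-5=4$. Alternatively one verifies these four vanishing identities directly by expanding the numerator. Granting this, and noting that the order of vanishing is even so that $(t-1)^{4}=(1-t)^{4}$ and no sign intervenes, one gets $\lim_{t\to1}N(t)/(1-t)^{4}=N^{(4)}(1)/4!$.

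Finally I would compute $N^{(4)}(1)/4!$ termwise. For a single monomial $\tfrac{1}{4!}\tfrac{d^{4}}{dt^{4}}t^{e}\big|_{t=1}=\binom{e}{4}$, so applying this to each exponent occurring in $N(t)$ --- with the multiplicities $4$ (the constant term $4t^{d}$), the six off-diagonal $\al_{ji}$ and six off-diagonal $\beta_{ji}$, and the nine $d\pm\al_{ij}$ --- and using $\binom{0}{4}=0$ to discard the leading constant $1$, collects precisely into
$$\binom{2d}{4}+4\binom{d}{4}+\sum_{i\ne j}\!\left(\binom{d+\beta_{ji}}{4}+\binom{d+\al_{ji}}{4}\right)-\sum_{i,j}\!\left(\binom{d+\al_{ij}}{4}+\binom{d-\al_{ij}}{4}\right).$$
Dividing by $\prod_{i,j}(a_i+b_j)$ yields the stated formula. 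I expect the order-of-vanishing claim of the third paragraph to be the only genuine obstacle; the remainder is the bookkeeping of the nine coordinate weights and their multiplicities, entirely parallel to the $\wG$ degree computation.
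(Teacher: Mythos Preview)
The paper does not include a proof of this proposition; it is stated without argument, with the implicit understanding that the formula is extracted from the Hilbert series exactly as the $\wG$ degree formula in \S\ref{basic-pfaffians} is extracted in \cite{wg}. Your proposal carries out precisely this standard computation: reading off $\deg(\wP)=\lim_{t\to1}(1-t)^{5}P_{\wP}(t)$, absorbing four factors of $(1-t)$ into the numerator (codimension~$4$), and evaluating $N^{(4)}(1)/4!$ term by term via $\tfrac{1}{4!}\tfrac{d^4}{dt^4}t^e\big|_{t=1}=\binom{e}{4}$. The bookkeeping matches the stated formula, so your approach is correct and is exactly the intended one.
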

Then obviously if $X=\wP \cap \left(\cap_{i=1}^2 (g_i)\right)$ is 
complete intersection Del Pezzo surface of index $I$ then $$-K_X^2=I^2 \prod_{i=1}^2\deg(g_i)\deg(
\wP)  $$


\subsection{Proof of Theorem \ref{thrm-pxp1}}
 We prove the  existence of each model with right invariants and singularities. Then we prove the  quasi-smoothness of the each model. We write the proof for a general member  $X$ of a family $\sX\subset \PP(a,b,c,d,e,f,g)$ of orbifold Del Pezzo surfaces. In the course of the proof our parameter is $r$ and the rest of the weights in terms of $r$ are  $$\begin{array}{llll}q=r-1,& s=r+1,&t=r+2,&u=r+3,\\v=2r+1,&y=2r-2,&z=2r-1,&m=3r-2.\end{array}$$ 
\subsubsection{\textbf{$\PxP$ model 11}}
\label{S-P11}
 The parameter $r=n, n \in \NN$ and for an  input parameter $w=(0,0,q;1,1,r),$ we get the embedding of a 4 dimensional orbifold  $$\wP \into \PP(1^4,r^4,z) \text{ with }K_{\wP}=\Oh(-(2r+1)).$$  Then if we take the complete intersection of 2 forms of degree $r$ then
$$X:= \wP\cap (r)^2 \into \PP(1^4,r^2,z)=\PP(a,b,c,d,e,f,g)$$ is a  Del Pezzo surfaces of index 1. \\
The equations of $X$ can be described by the $2 \times 2$ minors of  

$$\left[\begin{matrix}
a_1 &b_1&c_r\\
d_1&e_1&H_r\\
f_r& J_r&g_z
\end{matrix}\right].
$$
\textit{Quasismoothness:}
Any member of the family of  Del Pezzo orbifolds $ X$ has two non-trivial orbifold strata, one is of weight $z$ and the other of weight $r$. The weight $z$ locus is just a coordinate point which obviously lies on $X$. The variables $a, b,d$ and $e$ serves as tangent variables to the variable $g$. Thus it is a point of type $\frac 1{z}(1,1)$. On the other hand the weight $r$ locus is a copy restricted to $X$ is an empty set, so we do not get any other orbifolds singularities on $X$.  \\
 The  base locus of linear system $|\Oh(r)|$ consists of just a coordinate point of weight $z$ 
which is manifestly quasismooth. Thus the each member in this model  is a wellformed and quasismooth orbifold Del Pezzo surface with an orbifold point of type $\frac{1}{z}(1,1)$.    
\subsubsection{\textbf{$\PxP$ model 12}}
\label{S-P12} The paramter has the form $r=2n+1, n \in \NN$  and for an input parameter $w=(0,1,q;1,r,s)$, we have the embedding  $$
 \wP \into \PP(1,2,r^2,s^2,t,z,2r)
 \text{ and }K_{\wP}=\Oh(-(3r+2)).$$
Then a log  Del Pezzo surface of index 1 is obtained as the complete intersection of $\wP$ with a form degree  $2r$ and $s$ to get 
$$X:= \wP\cap (2r)\cap(s) \into \PP(1,2,r^2,s,t,z)=\PP( a,b,c,d,e,f,g).$$  Let $H_{s}$ and $H_{2r}$ denote the the weighted homogeneous forms of degree $s$ and $2r$ respectively, then the  equations  of $X$ are given  by the  $2\times 2$ of
$$\left[\begin{matrix} 
a_1&b_2&c_r\\ 
d_r&e_s&f_z\\ 
H_s&g_t&H_{2r}\end{matrix}\right].
$$ \\
\textit{Quasismoothness:}
The details of orbifold loci lying on $X$ is summarized as follows.

$$
\renewcommand*{\arraystretch}{1.2}
\begin{array}{cccl}
P_\orb & X\cap P_\orb \ & \text {Tangent}|\text{local variables}  & \text{Conditions on forms}  \\\hline
        z & \text {coordinate pt } f_z & a,b,g,e\;|\;c,d & H_s=e+\cdots \\
 t & \text {coordinate pt } g_t & a,d,c,f\;|\;b,e & F_3=d+\cdots \\
 r & \text {coordinate pt } c_r & d,e,g,a\;|\;b,f & H_s=ac+\cdots \\

        \end{array}
$$

 The weight $s$ locus do not intersect  $X$ as $H_s=e+\cdots$. The weight $2$ locus consists of $\PP[2,s]$ as $s$ is even but  its an  empty set as $H_{2r}=b^r+\cdots$. \\ Moreover, $X$ may contain the orbifold locus of weight 5 as $X\cap \PP[t,z]$, for example for $r=13$ but this does not give any new singularities at all. Similarly, $X$ may contain weight $3$ locus as $X\cap \PP[s,z]$; for example for $r=5$ but again this does not give any new singular point. Thus $X$ is a quasismooth on the orbifold locus. The quasismoothness on the base locus has been verified by computer algebra for $3\le n \le 99.$
    
\subsubsection{\textbf{$\PxP$ model 13}}
\label{S-P13}The paramter has the form $r=2n+1, n \in \NN$  and for an  input parameters $w=(0,1,q;1,2,r)$,   we get the following embedding of the 4-fold 
 $$
 \wP \into \PP(1,2^2,3,r^2,s^2,z) \text{ and }K_{\wP}=\Oh(-(2r+3)).$$
 The the complete intersection with two forms of degree $s$ is a  log  Del Pezzo surface 
$$X:= \wP\cap (s)^{2} \into \PP(1,2^2,3,r^2,z)=\PP( a,b,c,d,e,f,g)$$ of index 1.   The equations of  $X$ are    $2\times 2$ minors of  
\renewcommand*{\arraystretch}{1.2}
$$\left[\begin{matrix} 
a_1&b_2&c_r\\ 
d_2&e_3&H_s\\ 
f_r&J_s&g_z\end{matrix}\right].
$$ 
\textit{Quasismoothness:}
The details of orbifold loci lying on $X$ is summarized as follows.

$$
\renewcommand*{\arraystretch}{1.2}
\begin{array}{c|cc|l}
P_\orb & X\cap P_\orb \ & \text {Tangent}|\text{local variables}  & \text{Conditions on forms}  \\
        z & \text {coordinate pt } g_z & a,b,d,e\;|\;c,f &  \\
\evnrow         r&  \begin{matrix}
\text { 2 pts, on coordinate pt  } c\\
\text{ on coordinate pt } f

\end{matrix}   & \begin{matrix}
d,e,f,a\;|\;b,g \\
b,c,e,a\;|\;d,g
\end{matrix}&\begin{matrix}
J_s=ac+\cdots\\
H_s=af+\cdots\end{matrix} \\
3 & \text {coordinate pt } e_3 & a,c,f,g\;|\;b,d & 
        \end{array}
$$

 The weight $3$ is a coordinate point if $r$ is not divisible by 3. Otherwise, it defines 3 coordinate points of variables $c,e$ and $f$ and only the coordinate point of the variable $e$ gives a new orbifold point. if $z=0 \mod 3$ then it  gives only one new point.  The weight $2$ locus consists of $X\cap \PP[b,d]$, which does not itesect $X$ as these variables also appear in $H_s$ and $J_s$. Thus $X$ is a quasismooth on the orbifold locus.  
The quasismoothness for this model has been verified  by computer algebra for $3\le r \le 37.$
\subsubsection{\textbf{$\PxP$ model 14}}
\label{S-P14} The paramter has the form $r=6n-1, n \in \NN$  and for     input parameters $w=(0,1,q;2,r,s)$ to get 
 $$
 \wP \into \PP(2,3,r,s^3,t,z,2r) \text{ with }
K_\wG=\Oh(-3(r+1))).$$
We take a projective  cone of weight $r$ to get a 5-fold $\mathcal{C}^r\wP$ with the canonical divisor class $\Oh(-(4r+3)).$ Then the complete intersection of this 5-fold with  two forms of  weight $s$ and one form of weight $2r$ is  a log  Del Pezzo surface 
$$X:= \mathcal{C}^3\wP\cap (s)^2\cap(2r) \into \PP(r,2,3,r,s,t,z)=\PP(g, a,b,c,d,e,f)$$ of  index 1.   The equations are given by $2\times 2$ minors of  
$$\left[\begin{matrix} 
a_2&b_3&c_s\\ 
d_r&H_s&e_z\\ 
J_s&f_t&H_{2r} \end{matrix}\right],
$$
where $H_s,J_s$ and $H_{2r}$ are forms of degree $s,s$ and $2r$ respectively.\\
\textit{Quasismoothness:}
The details of orbifold loci lying on $X$ is summarized as follows.

$$
\renewcommand*{\arraystretch}{1}
\begin{array}{c|cc|l}
P_\orb & X\cap P_\orb \ & \text {Tangent}|\text{local variables}  & \text{Conditions on forms}  \\  
  \evnrow z & \text {coordinate pt } e_z & a,b,f,c\;|\;d,g & G_s=c+\cdots \\
 \oddrow t & \text {coordinate pt } f_t & a,c,d,e\;|\;b,g & \\

 \evnrow r&  \begin{matrix}
\text { 3 points by \ref{lemma-fletcher} }\\
\text{ on patch } d\\
\text{ on patch }g

\end{matrix}   & \begin{matrix}
\\
b,c,f,g\;|\;a,e \\
a,b,d,c\;|\;e,f
\end{matrix}&\begin{matrix}\\
H_s=c+\cdots,H_{2r}=dg+\cdots\\
H_{2r}=g^2+\cdots,
H_s=c+\cdots
\end{matrix}  \\
     
\oddrow                         3 & \text {coordinate pt } b_3 & d,e,c,f\;|\;a,g & H_{2r}=b^{2n}f+\cdots,F_t,G_t=c+\cdots
            
        \end{array}
$$
The weight $s$ orbifold locus obviously does not intersect $X$. The weight 3 locus is  $X\cap \PP[3,s,z]$ if $z=0\mod 3$ and $X\cap \PP[3,s]$ otherwise. In both we only get the coordinate point of variable $b$ as a new orbifold point. Finally, the weight 2 locus is given by $X\cap \PP[2,s]$ which obviously an empty set.  Thus $X$ has the correct type of singularities  and quasismooth on the orbifold locus. The quasismoothness  has been verified by computer algebra for $5\le r \le 611 .$


\subsection{Fano index 2 models}
In this case we get  two bi-parameterized   models  of families of log  Del Pezzo surfaces with rigid singularities, i.e. indexed by \(\NN \times \NN \).  If we fix one parameter,  we get  a one  parameter model which we computed earlier. Thus each of these two parameter models can be considered as consisting of infinite series of one parameter models.\\[2mm]
\textbf{Proof of Theorem \ref{thrm-pxp2}:}
Let $r=3m$ and \(y=3n+1\) be the two parameters with $q=r-1,s=r+1$ and $z=y+1$ .   If we choose the input parameter $(0,1,q;1,2,y)$ then we get the embedding of the ambient 4 dimensional orbifold $\wP$  

$$\wP\into\PP(1,2^2,3,r,s,y,z,w),$$
where $w=q+y.$
The orbifold canonical class,  computed from the Hilbert series, is given by  $$K_{\wP}=\Oh(-(r+y+3)).$$ 
Then the complete intersection with a form weight $w=r+y-1$ and a quadric gives a  Del Pezzo surface of index 2:
$$X=\wP\cap (H_w)\cap (H_2)\into \PP(1,2,r,3,s,y,z)=\PP(a,b,c,d,e,f,g).$$ 
The equations can be described by the $2\times 2$ minors of the following matrix.

\renewcommand*{\arraystretch}{1.2}
$$\left[\begin{matrix} 
a_1&b_2&c_r\\ 
H_2&d_3&e_s\\ 
f_y&g_z&H_w\end{matrix}\right].
$$  Now we prove the quasi smoothness of each model one by one by the analysis  of its orbifold loci.
\subsubsection{\textbf{$\PxP$ model 21}}
\label{S-P21}
In this model the parameters $r$ and $y$ both are multiples of 3. The details of orbifold loci lying on $X$ is summarized as follows.

$$
\renewcommand*{\arraystretch}{1.2}
\begin{array}{cccl}
P_\orb & X\cap P_\orb \ & \text {Tangent}|\text{local variables}  & \text{Conditions on forms}  \\\hline
 \evnrow z & \text {coordinate pt } g_z & a,c,e,b\;|\;d,f & H_2=b+\cdots  \\
  y & \text {coordinate pt } f_y & b,c,d,e\;|\;a,g &   \\
  \evnrow s & \text {coordinate pt } e_s & a,b,f,g\;|\;c,d &   \\
r & \text {coordinate pt } c_r & d,f,g,b\;|\;a,c & H_2=b+\cdots  \\
\evnrow 3 & \text {coordinate pt } d_3 & a,c,f,b\;|\;e,g & 
H_w=d^{m+n-1}b+\cdots        \end{array}
$$

The weight $3$ is given by $X\cap \PP[3,r,y]$ which gives 3 coordinate points. But only one new orbifold point appear on this locus as the coordinate points of weight $r$ and $y$ are counted earlier. The weight $2$ locus is $X\cap \PP[2,r,y]$ if $r \text{ and }y$ are even and is given by $X\cap \PP[2,s,z]$ otherwise. Essentially we do not get any new orbifold since in each case we get $bH_2$ as one of the defining equations of weight 2 locus.  Thus $X$ is a quasismooth on the orbifold locus.  

We verified the quasismoothness by using  computer algebra  for $6\le r, y\le 63. $ 
\subsubsection{\textbf{$\PxP$ model 22}}
\label{S-P22}
In this model we have $r=3m$, $y=3n+1$ such that $n\ge 2$ and $m\ge n$. The analysis of orbifold is similar for weight $z,y,s$ and $r$ to the firs model in the previous case. The weight 3 locus is given by  $X\cap \PP[3,r]$ which is  given by $$V(dH_w,cd)\subset \PP[c,d],$$ since 3 divides $w$. This gives a coordinate point $c$ which is already considered as $\frac 1r$ orbifold point. The weight 2 locus is $X\cap \PP[2,r,z]$ if $r$ is even and $X\cap \PP[2,s,y]$ otherwise. In both cases, we do not get any new orbifold point.
 Thus $X$ is a quasismooth on the orbifold locus. One can show that the base locus remains same for each value of parameter, following section \ref{S-Pf12}. We used the computer algebra to verify the quasismoothness  for $6\le r,y\le 63. $

\section{Summary of computational results and sporadic examples}
\label{Sporadic}
\subsection{Summary of computational results}We use  computer search routine of \cite{QJSC,BKZ} to search  families of orbifold Del Pezzo surface with isolated orbifold points.  The computer search is carried out in order of increasing the adjunction number of the Hilbert series of each format; separately for  each  Fano index. The computer search result do not have a termination condition but  it returns complete list of candidate families for each adjunction number. Thus our results are complete up to a certain value of adjunction number.   The Table \ref{tab-summary} contains the   summary of the computational  results and details of sporadic families of log Del Pezzo surfaces. It contains the number of candidates  from the computer search, how many of them contains only rigid singularities, how many of them are not in the models of the earlier sections  \S\ref{Pfaffians} and \S\ref{Binomials} and how many of those sporadic cases are quasismooth. 

\begin{table}[h]

\caption{ The first column contains the format and the Fano index of these orbifold Del Pezzo surface.  
 The column $q_{\mathrm{max}}$  gives the largest adjunction number
 searched in the given format;  \# \(X_c\) gives the number of candidates returned, \#\(X_{rc}\) contains the number of candidates with only rigid singularities, \#\(X_{src}\) contains the number sporadic rigid candidates (those candidates which are not in any models) and the last column \#QS-\(X_{src}\) gives the number of sporadic rigid families which are   quasismooth.
\label{tab-summary}}
\[
\renewcommand*{\arraystretch}{1.2} \begin{tabular}{|c|c|c|c|c|c|}\toprule\hline
 Format-\(I\) &  $q_{\max}$  & \# \(X_{c}\)
 & \# \(X_{rc}\)& \# \(X_{src}\) & \# QS--\(X_{src }  \)   
 \\\hline
\evnrow \(\wG\)-1 & 60 & 39 & 39 & 12 &10 \\\hline
\oddrow \(\wG\)-2 & 52 & 116 & 46 & 29 &6\\\hline
\evnrow \(\wF\)-1 & 80 & 49 & 49 & 14 &7\\\hline
\oddrow \(\wF\)-2 & 68 & 127 & 44 & 5 &0\\\hline

\end{tabular}\]
\end{table}

It is important to mention that  there are other type of key varieties appearing in \cite{wg,QS,QS2,QJGP} in codimension $c=5,\ldots,10$ which can be used as ambient varieties to search for  log Del Pezzo surfaces with isolated orbifold points. Indeed, we searched for the families of orbifold Del Pezzo surfaces in those formats but  the computer search do not provide even sporadic examples. 
      
\subsection{Sporadic cases}
\label{S-sporadic}
In this section we present  those cases which are not in any of our models but appear as sporadic families of orbifold Del Pezzo surfaces in each format.   We list the families of wellformed and quasismooth Del Pezzo surfaces with rigid singularities whos equations are  those given   by either the maximal  Pfaffians of $5\times 5$ skew symmetric matrix or by the  $2\times 2$ minors of order 3  matrices. Indeed, we follows all the steps of Section \ref{proof-strategy} to prove the  existence, wellformedness and quasismoothness of the following families of orbifold Del Pezzo surfaces.  In particular, we used  computer algebra calculations to show the quasismoothness on base locus in these examples. We summarized the results in the form of the following table.

\begin{longtable}{>{\hspace{0.5em}}llccccr<{\hspace{0.5em}}}
\caption{ log Del Pezzo surfaces in Pfaffian format} \label{tab!codim3}\\
\toprule
\multicolumn{1}{c}{WPS \&\ Para}&\multicolumn{1}{c}{Basket $\mathcal{B}$}&\multicolumn{1}{c}{$-K^2_X$}&\multicolumn{1}{c}{$h^0(-K)$}&\multicolumn{1}{c}{$I$}&\multicolumn{1}{c}{Weight Matrix}\\
\cmidrule(lr){1-1}\cmidrule(lr){2-2}\cmidrule(lr){3-5}\cmidrule(lr){6-6}
\endfirsthead
\multicolumn{7}{l}{\vspace{-0.25em}\scriptsize\emph{\tablename\ \thetable{} continued from previous page}}\\
\midrule
\endhead
\multicolumn{7}{r}{\scriptsize\emph{Continued on next page}}\\
\endfoot
\bottomrule
\endlastfoot

\evnrow $\begin{array}{@{}l@{}}\evnrow \PP(1,3^2,5^2,7),\\ w=\frac 12(-1, 3, 3, 7, 11)\end{array}$& $\dfrac 13(1,1),  \dfrac 15(1,1), \dfrac 17 (1,4)$&$\dfrac{29}{105}$&$1$&$1$&$\footnotesize\begin{matrix} 1&1&3&5\\ &3&5&7\\ &&5&7 \\ &&&9 \end{matrix}$\\
\oddrow $\begin{array}{@{}l@{}}\oddrow \PP(3^2,5^2,7^2),\\ w=\frac 12(1, 5, 5, 9, 9)\end{array}$& $ 3\times \dfrac 13(1,1),\dfrac 15(1,1),2\times \dfrac 17(1,4)$&$\dfrac{3}{35}$&$0$&$1$&$\footnotesize\begin{matrix} 3&3&5&5\\ &5&7&7\\ &&7&7\\ &&&9 \end{matrix}$\\
\evnrow $\begin{array}{@{}l@{}}\evnrow \PP(1,3,5,7,9,11),\\ w=\frac 12(-1, 3, 7, 11, 15)\end{array}$& $\dfrac 13(1,1),  \dfrac 1{11}(1,3) $&$\dfrac{5}{33}$&$1$&$1$&$\footnotesize\begin{matrix} 1&3&5&7\\ &5&7&9\\ &&9&11 \\ &&&13 \end{matrix}$\\
\oddrow $\begin{array}{@{}l@{}}\oddrow \PP(3,5,6,7,8,13),\\ w= \frac 12(1, 5,  9,11,15)\end{array}$& $ 2\times \dfrac 13(1,1), \dfrac 17(1,4),\dfrac{1}{13}(1,9)$&$\dfrac{11}{273}$&$0$&$1$&$\footnotesize\begin{matrix} 3&5&6&8\\ &7&8&10\\ &&10&12\\ &&&13 \end{matrix}$\\
\evnrow $\begin{array}{@{}l@{}}\evnrow \PP(1,4,5,7,11,17),\\ w=(0,1,4,7,10)\end{array}$& $\dfrac 1{17}(1,4) $&$\dfrac{2}{17}$&$1$&$1$&$\footnotesize\begin{matrix} 1&4&7&10\\ &5&8&11\\ &&11&14 \\ &&&17 \end{matrix}$\\

\oddrow $\begin{array}{@{}l@{}}\oddrow \PP(3,5,7,9,11,13),\\ w=\frac 12( 3, 7, 11, 11, 15)\end{array}$& $\dfrac 13(1,1),  \dfrac 1{5}(1,2) \dfrac 17 (1,4),\dfrac{1}{13}(1,8) $&$\dfrac{41}{1365}$&$0$&$1$&$\footnotesize\begin{matrix} 5&7&7&9\\ &7&9&11\\ &&11&13 \\ &&&15 \end{matrix}$\\
\evnrow $\begin{array}{@{}l@{}}\evnrow \PP(4,5,7,10,11,13),\\ w= \frac 12(1, 7,  9,13,19)\end{array}$& $  \dfrac 15(1,2),\dfrac{1}{11}(1,8),\dfrac{1}{13}(1,9)$&$\dfrac{16}{715}$&$0$&$1$&$\footnotesize\begin{matrix} 4&5&7&10\\ &8&10&13\\ &&11&14\\ &&&16 \end{matrix}$\\
\oddrow $\begin{array}{@{}l@{}}\oddrow \PP(1,3,7,8,13,19),\\ w= (-2,3,5,9,10)\end{array}$& $  \dfrac 17(1,4),\dfrac{1}{13}(1,9),\dfrac{1}{19}(1,10)$&$\dfrac{191}{1729}$&$0$&$1$&$\footnotesize\begin{matrix} 1&3&7&8\\ &8&12&13\\ &&14&15\\ &&&19 \end{matrix}$\\
\evnrow $\begin{array}{@{}l@{}}\evnrow \PP(1,5,6,9,14,19),\\ w= \frac 12(-3,5,13,15,23)\end{array}$& $  \dfrac 19(1,1),\dfrac{1}{19}(1,15)$&$\dfrac{13}{171}$&$1$&$1$&$\footnotesize\begin{matrix} 1&5&6&10\\ &9&10&14\\ &&14&18\\ &&&19 \end{matrix}$\\
\oddrow $\begin{array}{@{}l@{}}\oddrow \PP(1,5,7,10,14,23),\\ w=\frac 12 (-1,3,11,17,19)\end{array}$& $  \dfrac 15 (1,2),\dfrac 17(1,4),\dfrac{1}{23}(1,6)$&$\dfrac{52}{805}$&$1$&$1$&$\footnotesize\begin{matrix} 1&5&8&14\\ &7&10&16\\ &&14&20\\ &&&23 \end{matrix}$\\
\evnrow $\begin{array}{@{}l@{}}\evnrow \PP(1,3,5,6,7,8),\\ w=(0,1,2,5,6)\end{array}$& $\dfrac 13(1,1),  \dfrac 15(1,1), \dfrac 18 (1,5)$&$\dfrac{19}{30}$&$1$&$2$&$\footnotesize\begin{matrix} 1&2&5&6\\ &3&6&7\\ &&7&8 \\ &&&11 \end{matrix}$\\
\oddrow $\begin{array}{@{}l@{}}\oddrow \PP(4,5^2,7^2,8),\\ w=(2,2,3,5,5)\end{array}$& $ 2\times \dfrac 15(1,3),2\times \dfrac 17(1,3),\dfrac 18(1,1)$&$\dfrac{11}{70}$&$0$&$2$&$\footnotesize\begin{matrix} 4&5&7&7\\ &5&7&7\\ &&8&8\\ &&&10 \end{matrix}$\\
\evnrow $\begin{array}{@{}l@{}}\evnrow \PP(3,6,7^2,8^2),\\ w=\frac12(5,7,7,9,9)\end{array}$& $\dfrac 13(1,1),  \dfrac 16(1,1), \dfrac{1}{7}(1,2),2\times \dfrac 18 (1,5)$&$\dfrac{1}{7}$&$0$&$2$&$\footnotesize\begin{matrix} 6&6&7&7\\ &7&8&8\\ &&8&8 \\ &&&9 \end{matrix}$\\
\oddrow $\begin{array}{@{}l@{}}\oddrow \PP(3,6,7^2,8,11),\\ w=(1,2,5,6,6)\end{array}$& $ 2\times \dfrac 13(1,1), \dfrac 17(1,2),\dfrac 18(1,5), \dfrac{1}{11}(1,3)$&$\dfrac{59}{462}$&$0$&$2$&$\footnotesize\begin{matrix} 3&6&7&7\\ &7&8&8\\ &&11&11\\ &&&12 \end{matrix}$\\
\evnrow $\begin{array}{@{}l@{}}\evnrow \PP(3,7,8^2,9,10),\\ w=\frac12(7,7,9,9,11)\end{array}$& $\dfrac 13(1,1),  \dfrac 17(1,1), 2\times \dfrac 18 (1,5),\dfrac{1}{10}(1,3)$&$\dfrac{11}{105}$&$0$&$2$&$\footnotesize\begin{matrix} 7&8&8&9\\ &8&8&9\\ &&9&10 \\ &&&10 \end{matrix}$\\
\oddrow $\begin{array}{@{}l@{}}\oddrow \PP(4,7^2,9^2,12),\\ w=(2,2,5,7,7)\end{array}$& $ 2\times \dfrac 17(1,3), 2\times \dfrac 19(1,4), \dfrac{1}{12}(1,1)$&$\dfrac{5}{63}$&$0$&$2$&$\footnotesize\begin{matrix} 4&7&9&9\\ &7&9&9\\ &&12&12\\ &&&14 \end{matrix}$\\

\end{longtable}

\begin{longtable}{>{\hspace{0.5em}}llccccr<{\hspace{0.5em}}}
\caption{log Del Pezzo surfaces in $\PxP$ format} \label{tab!codim4}\\
\toprule
\multicolumn{1}{c}{WPS, Para}&\multicolumn{1}{c}{Basket $\mathcal{B}$}&\multicolumn{1}{c}{$-K^2_X$}&\multicolumn{1}{c}{$h^0(-K)$}&\multicolumn{1}{c}{$I$}&\multicolumn{1}{l}{Weight Matrix}\\
\cmidrule(lr){1-1}\cmidrule(lr){2-2}\cmidrule(lr){3-5}\cmidrule(lr){6-6}
\endfirsthead
\multicolumn{7}{l}{\vspace{-0.25em}\scriptsize\emph{\tablename\ \thetable{} continued from previous page}}\\
\midrule
\endhead
\multicolumn{7}{r}{\scriptsize\emph{Continued on next page}}\\
\endfoot
\bottomrule
\endlastfoot
\evnrow $\begin{array}{@{}l@{}}\evnrow \PP(1,2,3^2,4,5^2),\\ w=(0,1,2;1,3,4)\end{array}$& $\dfrac 13(1,1), \dfrac 15(1,2), \dfrac 15(1,1)$&$\dfrac{8}{15}$&$1$&$1$&$\footnotesize\begin{matrix} 1&2&3\\ 3&4&5\\ 4&5&6 \end{matrix}$\\
\oddrow $\begin{array}{@{}l@{}}\oddrow \PP(3,4,5^2,6,7^2),\\ w=(0,1,2;4,5,6)\end{array}$& $ 2\times \dfrac 15(1,2),2\times \dfrac 17(1,4)$&$\dfrac{3}{35}$&$0$&$1$&$\footnotesize\begin{matrix} 4&5&6\\ 5&6&7\\ 6&7&8 \end{matrix}$\\
\evnrow $\begin{array}{@{}l@{}}\evnrow \PP(2,3,5^2,6,9,13)\\ w=(0,1,4;2,5,9)\end{array}$& $2\times\dfrac 13(1,1),2\times \dfrac 15(1,2), \dfrac 15(1,1),\dfrac 1 {13}(1,7)$&$\dfrac{22}{195}$&$0$&$1$&$\footnotesize\begin{matrix} 2&3&6\\ 5&6&9\\ 9&10&13 \end{matrix}$\\
\oddrow $\begin{array}{@{}l@{}}\oddrow \PP(1,3,5^2,7,9,13)\\ w=(0,2,4;1,5,9)\end{array}$&
$ \dfrac 15(1,2),\dfrac 15(1,1), \dfrac 17 (1,4),\dfrac{1}{13}(1,7)$&$\dfrac{86}{455}$&$1$&$1$&$\footnotesize\begin{matrix}
1&5&9\\ 3&7&11\\ 5&9&13 \end{matrix}$\\

\evnrow $\begin{array}{@{}l@{}}\evnrow \PP(1,5^2,7,9,11,17)\\ w=(0,4,6;1,5,11)\end{array}$& $ \dfrac 15(1,2), \dfrac 17(1,4), \dfrac 19 (1,1),\dfrac{1}{17}(1,4)$&$\dfrac{562}{5355}$&$1$&$1$&$\footnotesize\begin{matrix} 1&5&7\\ 5&9&11\\ 11&15&17 \end{matrix}$\\
\oddrow $\begin{array}{@{}l@{}}\oddrow \PP(2,3,7^2,8,13,19)\\ w=(0,1,6;2,7,13)\end{array}$& $\dfrac 13(1,1),2\times \dfrac 17(1,3), \dfrac 17(1,4),\dfrac 1 {19}(1,12)$&$\dfrac{4}{57}$&$0$&$1$&$\footnotesize\begin{matrix} 2&3&8\\ 7&8&13\\ 13&14&19 \end{matrix}$\\

\end{longtable}

\bibliographystyle{amsalpha}
\bibliography{References}


\end{document}